 \providecommand{\U}[1]{\protect\rule{.1in}{.1in}}
 \newtheorem{theorem}{Theorem}[section]
 \theoremstyle{plain}
 \newtheorem{corollary}{Corollary}[section]
 \newtheorem{definition}{Definition}[section]
 \newtheorem{example}{Example}[section]
 \newtheorem{lemma}{Lemma}[section]
 \newtheorem{proposition}{Proposition}[section]
 \newtheorem{remark}{Remark}[section]
 \numberwithin{equation}{section}
\begin{document}
 	\title[{\normalsize New Results On $S-r-$ideals in Commutative Rings}]{{\normalsize New Results On $S-r-$ideals in Commutative Rings }}
 	\author{Abuzer G\"und\"uz}
 	\address{Department of Mathematics, Faculty of Science, Sakaraya University, 
54100, Sakarya, T\"urkiye}
 	\email{abuzergunduz@sakarya.edu.tr}
 	\author{Osama A. Naji}
 	\address{Department of Mathematics, Faculty of Science, Sakaraya University, 
54100, Sakarya, T\"urkiye}
 	\email{osamanaji@sakarya.edu.tr}
    \author{Mehmet \"Ozen}
 	\address{Department of Mathematics, Faculty of Science, Sakaraya University, 
54100, Sakarya, T\"urkiye}
 	\email{ozen@sakarya.edu.tr}
 	\ 	\subjclass[2020]{13C05, 13B30, 13B25}
 	\keywords{$S-r-$ideal, $r-$ideal, Property A, prime ideal}
 	
 	\begin{abstract}
 	 This article studies the notion of  $S-r-$ideals in commutative ring $H$, where $S$ is a multiplicatively closed subset of $H$. Some basic properties of $S-r-$ideals are given. Various characterizations of $S-r-$ideals are presented. Also, $S-uz-$ring is defined and it is proved that $H$ is an $S-uz-$ring if and only if every maximal ideal disjoint from $S$ is an $S-r-$ideal provided $S$ is finite. In addition, the $S-r-$ideal concept is examined in amalgamation and trivial extension. Finally, $S-r-$ideals are studied in polynomial rings and it is investigated that when $A[x]$ is an $S-r-$ideal of $H[x].$

 	\end{abstract}
 	\maketitle
 	
 	\section{Introduction}\label{sec1}

Let $H$ be a commutative ring and $w\in H$. The set of elements annihilating $w$ is defined by $Ann(w)=\{y\in H: yw=0\}$. An element $w\in H$ is called regular (resp., zero divisor) if $Ann(w)=0$ (resp., $Ann(w)\neq 0)$. Consider the subset $T$ of $H$ and the ideal $A$ of $H$. The ideal $(A:T)$ is defined by $ \{w\in H: wT\subseteq A\}$ and we let $(0:T)=Ann(T)$. The sets $reg(H)$ and $zd(H)$ represent the set of all regular elements of $H$ and zero divisor elements of $H$, respectively. If $A$ contains a regular element, $i.e. \; A\cap r(H)\neq 0,$ then $A$ is said to be a regular ideal of $H$.

Let $S$ be a subset of $H$. If $1\in S$ and for each $s_1,s_2\in S$ we have $s_1\cdot s_2\in S,$ then $S$ is called a multiplicatively closed subset (briefly, m.c.s.). For an ideal $A$ of $H$, if $A\neq H$, then $A$ is called a proper ideal of $H.$ In many studies, $S-$versions of prime ideals and their generalizations have been researched such as \cite{HM}, \cite{Mass} and \cite{CH}.

    A proper ideal $A$ of $H$ is said to be $S-$prime if $S\cap A=\varnothing$ and there exists $s\in S$ such that for every $w,z\in H,\; wz\in A$ implies that $sw\in A$ or $sz\in A$. Note that $A$ is prime if $S=\{1\}$ \cite{HM}. The set $Min(A)$ is the set of all minimal prime ideals containing $A$. Also, all minimal prime ideals of $H$ can be shown by $Min(H).$ The set of all prime ideals (resp., maximal ideals) of $H$ is $Spec(H)$ (resp., $Max(H)$). An ideal $A$ of a reduced ring $H$ is called a $z^0-$ideal if $w\in A,\: z\in H$ with $Ann(w)=Ann(z)$, then $z\in A$. For more details about $z^0-$ideal see \cite{AM}, \cite{AKA} and \cite{Mas}.
    
Mohamadian introduced $r-$ideals of $H$ in \cite{M}. An ideal $A$ is called $r-ideal$  (resp., $pr-ideal$) of $H$ if for $w,z\in H,\; wz\in A$ and $Ann(w)=0$, then $z\in A (resp., \;z^n\in A$ for some $n\in \mathbb{N}).$ Mohamadian studied $r-ideals$ and defined $pr-ideals$, also investigated the relationship between $r-ideals$, prime ideals and $Min(A)$. He gave some results about polynomial ring and $r-ideals$. He also examined the connection between $z^0-$ideals and $r-$ideals. Leter, uniformly $pr-$ideals are studied in \cite{Uregen}.

Let $H$ be a ring, $S$  a m.c.s. and $A$ an ideal of $H$ disjoint from $S$. The ideal $A$ is called $S-r-ideal$ if $\exists s\in S$ such that for $w,z\in H,\; wz\in A$ and $Ann(w)=0$, then $sz\in A$ \cite{MH}.

 We know from \cite{M} that the prime ideals and $r-$ideals (and so $S-r-$ideals by taking $S=\{1_H\}$) are different. In fact, zero ideal in the ring $\mathbb{Z}_{6}$ is an $r-ideal$ but it is not prime, because $\bar{2}\cdot \bar{3}= \bar{0}\in \{\bar{0} \} $, however $2\notin \bar{0}$ and $\bar{3} \notin \{\bar{0}\}$. Also, not every prime ideal is an $r-ideal$. For instance, take $A=3\mathbb{Z}$ is prime in $\mathbb{Z}$, but it is not an $r-$ideal since $3\cdot 1=3\in A$, with $Ann(3)=0$, but $1\notin A$, see \cite{LC}.


    A ring $H$ has Property A if every finitely generated ideal $B\subseteq zd(H)$ has non-zero annihilator. In addition, if for each finitely generated ideal $A$ of $H$ there is $z\in H$ with $Ann(A)=Ann(z)$, then $H$ satisfies the annihilator condition (briefly, a.c.).
    See \cite{Huck} for the more details and background on the above concepts. 
    
    Consider the ring $H$ and the $H-$module $E$. The idealization of $H$ (also called the trivial extension of $H$ by $E$), shown by $H\propto E$, was introduced by Nagata \cite[p.2]{Nagata}. $H\propto E$ is an abelian group with multiplication defined by $(w,e)(z,f)=(wz, wf+ze)$ where $w,z\in H$ and $e,f\in E$. For more information about trivial extensions, see \cite{Ander}, \cite{DKM}, \cite{KM}.

    Let $H_1$ and $H_2$ be commutative rings, $J$ be an ideal of $H_2$ and $f:H_1\rightarrow H_2$ be a ring homomorphism. The set $$H_1\bowtie^f J=\{(w,f(w)+j):\; w\in H_1, j\in J \} $$ is said to be the amalgamation of $H_1$ and $H_2$ along $J$ with respect to $f.$ 
    It is clear that it is a subring of $H_1\times H_2.$ 
    This construction generalizes the amalgamated duplication of a ring along an ideal which was introduced in \cite{DFF} by D'Anna and Fontana. In \cite{DFF2} and \cite{DF}, the more general context of amalgamations was introduced and studied in the frame of pullbacks allowing to give various results on the transfer of many properties from $A$ and $f(H_1)+J$ to $H_1\bowtie^f J.$
    The notion of amalgamation has gained great importance and received considerable attention during the last years. We refer to \cite{DFF}, \cite{DMZ}, \cite{MMZ2} and \cite{Gunduz} for more details.

 Inspired and motivated by the previous references and concepts, especially reference \cite{M},  we introduce this paper. The paper aims to study the concept of $S-r-ideal$. This study is composed of four sections. In section \ref{sec2}, we give some basic properties of $S-r-$ideals. We provide an example showing that not every $S-r-$ideal is an $r-$ideal (Example \ref{Ex.1}). We also introduce various characterizations of $S-r-$ideals. see Theorems \ref{Thm 2.1}, \ref{theorem1} and Proposition \ref{Prop 2.14}. Moreover, we define the $S-uz-$ring and show that $H$ is an $S-uz-$ ring if and only if every maximal ideal $M$ with $S\cap M=\varnothing$, is an $S-r-$ideal, provided $S\subseteq H$ is a finite m.c.s. (Proposition \ref{S-uz-prime}). In section \ref{sec3}; we examine the $S-r-$ideal concept in amalgamation and trivial extension. Finally, in section \ref{sec4}, we study $S-r-$ideals in polynomial rings. We investigate when $A[x]$ is an $S-r-$ideal of $H[x].$


 	\section{$S-r-$ideals and some Properties}\label{sec2}

\begin{definition}\cite{MH}
    Let $H$ be a ring, $S$ a m.c.s. and $A$ an ideal of $H$ with $A\cap S=\emptyset$. The ideal $A$ is called $S-r-ideal$ if $\exists s\in S$ such that for $w,z\in H,$ if $ wz\in A$ and $Ann(w)=0$, then $sz\in A.$
\end{definition}

The following example shows that not every $S-r-$ideal is an $r-$ideal.

\begin{example}\label{Ex.1}
    Let $H=\mathbb{Z}\times \mathbb{Z}$ and $S=u(\mathbb{Z})\times \mathbb{Z}$. Consider the ideal $A=0\times 2\mathbb{Z}$. Fix $s=(1,0)\in S$. Note from  [\cite{MH}, Theorem 3.1] and [\cite{M}, Proposition 2.8] that $A$ is not an $r-$ideal. However, $A$ is an $S-r$ideal. Indeed, $A\cap S=\varnothing,$ and let $(a,b)(c,d)\in A$ and $Ann(a,b)=(0,0)$. Observe that $ac=0$ and so $(c,0)(a,b)=(0,0)$. Hence $(c,0)\in Ann(a,b)=(0,0)$, thus $c=0$. Now, we have $s(c,d)=(1,0)(0,d)=(0,0)\in A$. Therefore $A$ is an $S-r-$ideal.
\end{example}

\begin{proposition} \label{Prop.23}
    Let $S_1 \subseteq S_2$ be two m.c.s.s of $H$. If $A$ is an $S_1-r-$ideal of $H$ and $A\cap S_2=\varnothing$ then $A$ is an $S_2-r-$ideal of $H$. The converse holds if $\forall s\in S_2, \; \exists r\in H$ such that $rs\in S_1.$
\end{proposition}

\begin{proof}
    Evident.
\end{proof}

\begin{example}\label{Ex.2}
Take an ideal $A$ of $\mathbb{Z}_{12}$.
Let $ \bar{w}, \bar{z} \in \mathbb{Z}_{12},\;  \bar{w}\cdot \bar{z} \in A$ and $Ann(\bar{w})=\bar{0}$. Then, $\bar{w}$ and $12$ are relatively prime since $Ann(\bar{w})=\bar{0}$. Thus $\bar{w}$ has an inverse in  $\mathbb{Z}_{12}$ and this implies that $\bar{z}=\bar{w}^{-1}\cdot(\bar{w}\bar{z}) \in A$. Namely, every ideal of $\mathbb{Z}_{12}$ is an $r-$ideal and hence it is an $S-r-$ideal for every multiplicatively closed set $S$ of $\mathbb{Z}_{12}$ with $S\cap A=\varnothing.$
\end{example}

Massoud and Hamid \cite{MH} showed that if $A$ is an $S-r-$ ideal of $H$ with $S\subseteq reg(H)$, then $S^{-1}A$ is an $r-ideal$. They also showed that the converse is true if $S^{-1}A\cap H=(A:s)$ for some $s\in S$. We show that the converse is true when $S$ is finite.

\begin{proposition}
    Let $A$ be an ideal of $H$ and $S \subseteq reg(H)$. If $S$ is finite and $S^{-1}A$ is an $r-$ideal, then $A$ is an $S-r-$ideal of $H$.
\end{proposition}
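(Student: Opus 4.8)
The plan is to move the problem into the localization $S^{-1}H$, exploit the $r$-ideal hypothesis there, and then use the finiteness of $S$ to replace the pair-dependent multipliers one obtains by a single uniform element. As a preliminary (and easy) step I would first verify that $A\cap S=\varnothing$, which is part of what it means to be an $S-r-$ideal: since $S^{-1}A$ is a (proper) $r$-ideal, if some $s_0\in A\cap S$ existed then $s_0/1\in S^{-1}A$ would be a unit of $S^{-1}H$, forcing $S^{-1}A=S^{-1}H$, a contradiction.

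Next I would fix an arbitrary pair $w,z\in H$ with $wz\in A$ and $Ann(w)=0$, and transport the situation to $S^{-1}H$. The key preliminary observation is that $Ann_{S^{-1}H}(w/1)=0$: if $(x/t)(w/1)=0$ then $uxw=0$ for some $u\in S$, so $(ux)\in Ann(w)=0$, whence $ux=0$, and since $u\in S\subseteq reg(H)$ we get $x=0$, i.e.\ $x/t=0$. Now $(w/1)(z/1)=wz/1\in S^{-1}A$ together with $Ann_{S^{-1}H}(w/1)=0$ and the $r$-ideal property of $S^{-1}A$ force $z/1\in S^{-1}A$. Writing $z/1=a/t$ with $a\in A$, $t\in S$, and clearing denominators gives $u(tz-a)=0$ for some $u\in S$, hence $(ut)z=ua\in A$. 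Thus $s_{w,z}z\in A$ where $s_{w,z}:=ut\in S$.

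The main obstacle is that this multiplier $s_{w,z}$ depends on the chosen pair, whereas the definition of an $S-r-$ideal requires one fixed $s\in S$ serving all pairs simultaneously, and this is precisely where finiteness is used. I would set $s^{\ast}:=\prod_{s\in S}s$, which lies in $S$ because $S$ is a finite m.c.s. For every admissible pair, $s_{w,z}\in S$ is one of the factors of $s^{\ast}$, so $s^{\ast}=s_{w,z}\cdot c$ for some $c\in H$, and therefore $s^{\ast}z=c\,(s_{w,z}z)\in A$ since $A$ is an ideal. Hence $s^{\ast}$ witnesses the $S-r-$ideal condition for all $w,z$ at once, which completes the argument. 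I expect the localization bookkeeping (the equivalence of $Ann(w)=0$ with $Ann_{S^{-1}H}(w/1)=0$ and the clearing of denominators) to be routine; the genuine content lies only in this final uniformization via the product over the finite set $S$.
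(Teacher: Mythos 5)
Your proof is correct and takes essentially the same route as the paper's: localize, check that $Ann_{S^{-1}H}(w/1)=0$ using $S\subseteq reg(H)$, invoke the $r$-ideal property of $S^{-1}A$ to get $z/1\in S^{-1}A$, and use the product of all elements of the finite set $S$ as the uniform witness. The only difference is that you spell out the denominator-clearing and the final uniformization step $s^{\ast}=s_{w,z}\cdot c$, which the paper leaves implicit in its closing ``So $sz\in A$.''
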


\begin{proof}
    Note that if $S\cap A\neq \varnothing$, then $S^{-1}A=S^{-1}H$, so we assume that $S\cap A=\varnothing$. Let $S=\{s_1, \cdots ,s_n\}$ be a finite set and put $s=s_1\cdot s_2 \cdots s_n$. Let $wz \in A$ for some $w,z \in H$ and $Ann(w)=0,$ then $\frac{w}{1} \cdot \frac{z}{1}\in S^{-1}A$. We show that $Ann(\frac{w}{1})=\frac{0}{1}=0$. Pick $\frac{r}{t} \in Ann(\frac{w}{1})$ and hence $\frac{r}{t} \cdot\frac{w}{1}=\frac{rw}{t}=\frac{0}{1}.$ Then for some $u \in S$. We have $ urw=0.$ As $S \subseteq reg(H),$ we get $rw=0 $ and so  $r \in Ann(w)=0$. Thus, $\frac{r}{t}=\frac{0}{1}.$ Since $S^{-1}A$ is an $r-ideal$, $\frac{z}{1}\in S^{-1}A$. So, $sz\in A.$
    
\end{proof}




\begin{proposition}
    If $A\subseteq zd(H)$ is not an $S-r-$ideal, then there are two ideals $B$ and $K$ such that $B\cap reg(H)\neq \varnothing,\; A\subset_{\neq} B,K$ and $BK\subseteq A.$ 
\end{proposition}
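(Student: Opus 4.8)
The plan is to read off a single regular witness from the failure of the defining condition and then form $B$ as an enlargement of $A$ by that witness and $K$ as the corresponding residual ideal. First I would negate the definition. Since $A$ is a candidate $S-r-$ideal we have $A\cap S=\varnothing$, so $A$ failing to be an $S-r-$ideal means the defining implication fails for \emph{every} $s\in S$; applying this to $s=1_H\in S$ produces $w,z\in H$ with $wz\in A$, $Ann(w)=0$, and $1_H\cdot z=z\notin A$. Thus we obtain a regular element $w$ together with an element $z\notin A$ satisfying $wz\in A$, which is exactly the data an ordinary $r-$ideal failure would provide.

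With this witness fixed, I would set
\[
B=A+wH \qquad\text{and}\qquad K=(A:w)=\{x\in H:\ wx\in A\},
\]
both ideals of $H$. Three of the four requirements are then immediate. Since $w\in B$ and $Ann(w)=0$, we get $B\cap reg(H)\neq\varnothing$. The hypothesis $A\subseteq zd(H)$ forces $w\notin A$ (a regular element cannot be a zero divisor), and as $w\in B$ this yields the strict inclusion $A\subsetneq B$. For $K$, the inclusion $A\subseteq K$ is clear because $wa\in A$ for every $a\in A$, while $z\in K$ (as $wz\in A$) together with $z\notin A$ gives $A\subsetneq K$.

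The remaining point is $BK\subseteq A$, which I would obtain by a one-line expansion: $BK=(A+wH)K=AK+wHK$, where $AK\subseteq A$ because $A$ is an ideal, and $wHK\subseteq A$ because $wk\in A$ for each $k\in K$ by the definition of $(A:w)$. Hence $BK\subseteq A$, and the ideals $A\subsetneq B,K$ with $B\cap reg(H)\neq\varnothing$ and $BK\subseteq A$ are as required.

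The only genuine subtlety I anticipate is the logical unwinding in the first step: one must notice that the negation of the $S-r-$ideal property ranges over all $s\in S$ and hence covers $s=1_H$, which is what reduces an $S-r-$failure to a clean regular witness with $z\notin A$. After that, the role of the standing hypothesis $A\subseteq zd(H)$ is pinpointed—it is used solely to guarantee $w\notin A$ and thus the \emph{strict} inclusion $A\subsetneq B$; the disjointness $A\cap S=\varnothing$ that is built into the notion of an $S-r-$ideal should be retained, since without it $A$ could meet $S$ while still being $r-$ideal-like, and then no such $B,K$ need exist.
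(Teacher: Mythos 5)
Your proof is correct, and it reaches the conclusion by a different decomposition than the paper's. The paper keeps $s\in S$ arbitrary: from the negation it takes $r,x\in H$ with $rx\in A$, $Ann(r)=0$ and $sx\notin A$, and sets $B=(A:sx)$, which contains the regular element $r$ since $r\in(A:x)\subseteq(A:sx)$, and $K=(A:B)$; then $BK\subseteq A$ is automatic, $r\in B\setminus A$ gives strictness for $B$, and strictness for $K$ (left implicit in the paper) follows since $sx\in(A:B)\setminus A$. You instead specialize the negation to $s=1_H$, obtaining $w,z$ with $wz\in A$, $Ann(w)=0$, $z\notin A$, and adjoin the regular witness directly: $B=A+wH$, $K=(A:w)$. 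In fact $(A:A+wH)=(A:A)\cap(A:w)=(A:w)$, so your $K$ coincides with $(A:B)$ for your choice of $B$; the two constructions are close cousins, but yours is more economical and makes both strict inclusions explicit, with $w\in B\setminus A$ coming from $A\subseteq zd(H)$ and $z\in K\setminus A$ by construction, which is exactly the point the paper glosses over on the $K$ side. Your closing caveat is also correct and matches what the paper implicitly assumes: ``not an $S-r-$ideal'' must be read as failure of the defining implication for every $s\in S$, not as failure of the disjointness $A\cap S=\varnothing$; under the latter reading the statement is false, e.g.\ $A=2\mathbb{Z}_6\subseteq zd(\mathbb{Z}_6)$ with $S=\{\bar{1},\bar{4}\}$ meets $S$, yet the only ideal properly containing $A$ is $\mathbb{Z}_6$ itself, so no such $B,K$ can exist.
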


\begin{proof}
    Assume that $\forall s\in S,\; \exists r,x \in H,\; rx\in A$ with $Ann(r)=0$ and $sx\notin A$. Since $rx \in A$, then $r\in (A:x)\subseteq (A:sx)$ and let $B=(A:sx)$ and $K=(A:B)$. It implies that $r\in B\setminus A,\; B\cap reg(H)\neq \varnothing$ and $BK\subseteq A$. Note that since $Ann(r)=0$ and  $r\notin zd(H)$, then  $r\notin A$. It is clear that $A\subseteq B,K.$
\end{proof}

In the following theorem, we give a characterization of $S-r-$ideal when $S$ is the set of all regular elements of $H.$

\begin{theorem}\label{Thm 2.1}
    Let $A$ be a proper ideal, $S=reg(H)$ and $S\cap A= \varnothing$. Then the following statements are equivalent:\\
    a) $A$ is an $S-r-$ideal.\\
    b) $\exists s \in S$ such that $s[rH\cap A]\subseteq rA,\; \forall r \in reg(H)$.\\
    c) $\exists s \in S$ such that $s(A:r) \subseteq A,\; \forall r \in reg(H)$.\\
    d) $s\pi^{-1}(S^{-1}A) \subseteq A$, where $\pi: H\rightarrow S^{-1}H$ is the natural map.
    
\end{theorem}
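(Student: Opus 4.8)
The plan is to route all three nontrivial equivalences through the conductor ideals $(A:r)$, after first translating the annihilator hypothesis into membership in $S$. Since $S=reg(H)$, the condition $Ann(w)=0$ is literally $w\in S$, so being an $S-r-$ideal says: $\exists s\in S$ such that for every $r\in reg(H)$ and every $z\in H$ with $rz\in A$ one has $sz\in A$. Reading this with $z$ ranging over $(A:r)=\{z:rz\in A\}$ makes $(c)$ a verbatim restatement of $(a)$, so $(a)\Leftrightarrow(c)$ is immediate and needs no computation.

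For $(b)\Leftrightarrow(c)$ I would first establish the set identity $rH\cap A=r(A:r)$ for each regular $r$: an element lies in $rH\cap A$ exactly when it has the form $rz$ with $rz\in A$, that is, $rz$ with $z\in(A:r)$. Granting this, $(c)\Rightarrow(b)$ follows by multiplying the inclusion $s(A:r)\subseteq A$ through by $r$. The reverse direction $(b)\Rightarrow(c)$ is where regularity does the real work: from $s\cdot r(A:r)\subseteq rA$ one gets, for each $z\in(A:r)$, an equation $rsz=ra$ with $a\in A$, hence $r(sz-a)=0$, and cancelling the regular element $r$ gives $sz=a\in A$.

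The step $(c)\Leftrightarrow(d)$ I would handle by computing the contraction of $S^{-1}A$ along $\pi$. The key claim is
\[
\pi^{-1}(S^{-1}A)=\bigcup_{r\in reg(H)}(A:r).
\]
Indeed, if $h/1=a/s'$ in $S^{-1}A$ then $u(hs'-a)=0$ for some $u\in S$; because $u$ is regular this forces $hs'=a\in A$, so $h\in(A:s')$, and the reverse inclusion is clear. With this identity, $s\pi^{-1}(S^{-1}A)\subseteq A$ unpacks as $s(A:r)\subseteq A$ for all regular $r$, which is precisely $(c)$. Together, $(a)\Leftrightarrow(c)$, $(b)\Leftrightarrow(c)$ and $(c)\Leftrightarrow(d)$ give the full mutual equivalence of the four statements.

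I expect the only genuinely delicate point to be the localization computation in $(c)\Leftrightarrow(d)$: one must use that every element of $S=reg(H)$ is regular in order to discard the auxiliary multiplier $u$ arising from the equivalence relation defining $S^{-1}H$, so that $h/1\in S^{-1}A$ collapses to the honest equation $hs'=a$ in $H$. This cancellation, together with the parallel cancellation in $(b)\Rightarrow(c)$, is exactly where the hypothesis $S=reg(H)$ is used rather than the general properties of an arbitrary m.c.s.
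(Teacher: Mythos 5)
Your proposal is correct, but it is organized differently from the paper's proof. The paper proves the cycle $(a)\Rightarrow(b)\Rightarrow(c)\Rightarrow(d)\Rightarrow(a)$: its $(a)\Rightarrow(b)$ applies the $S-r-$ideal property to a factorization $x=rw$ of an element of $rH\cap A$, its $(b)\Rightarrow(c)$ is the same cancellation you perform, its $(c)\Rightarrow(d)$ uses that $x/1\in S^{-1}A$ forces $ux\in A$ for some $u\in S$, and its $(d)\Rightarrow(a)$ closes the loop by inverting the regular element $w$ inside $S^{-1}H$ to conclude $z/1\in S^{-1}A$. You instead make $(c)$ the hub of three equivalences. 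Your opening observation --- that when $S=reg(H)$ the hypothesis $Ann(w)=0$ says exactly $w\in S$, so that $(a)$ and $(c)$ are verbatim the same statement --- is a genuine simplification the paper does not exploit; it removes any need for the paper's inversion trick in $(d)\Rightarrow(a)$. Your identity $rH\cap A=r(A:r)$ packages in one line what the paper's $(a)\Rightarrow(b)$ and $(b)\Rightarrow(c)$ do elementwise, and your contraction formula $\pi^{-1}(S^{-1}A)=\bigcup_{r\in reg(H)}(A:r)$ exhibits $(d)$ as a repackaging of $(c)$, giving both directions of $(c)\Leftrightarrow(d)$ at once, whereas the paper proves only $(c)\Rightarrow(d)$ directly and recovers the converse around the cycle. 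One correction of emphasis: regularity of the elements of $S$ is not actually needed for the contraction formula, since for an arbitrary m.c.s. one still has $\pi^{-1}(S^{-1}A)=\bigcup_{u\in S}(A:u)$ by absorbing rather than discarding the auxiliary multiplier (from $u(hs'-a)=0$ one gets $h\in(A:us')$ with $us'\in S$); the place where regularity is truly indispensable is the cancellation of $r$ in your $(b)\Rightarrow(c)$, exactly as in the paper's corresponding step.
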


\begin{proof}
    $(a)\Rightarrow (b):$ Let $s$ be the element satisfying $S-r-ideal$ property. Let $x \in rH\cap A.$ Then $x=rw, w\in H$ and $x\in A.$ So $rw \in A$ and since $r \in reg(H)$, then $Ann(r)=0$ and this implies that $sw \in A$ and so $swr \in rA$ and hence $sx\in rA$, as desired. As a result we get $s[rH\cap A]\subseteq rA$.\\
    $(b)\Rightarrow (c):$ Let $x \in (A:r)\Rightarrow xr \in A$ and at the same time $xr \in rH$ and so $xr\in rH\cap A.$ By $(b)$ we get $sxr \in rA$ and implies that $sxr=ry$ for some $ y\in A$. From $r(sx-y)=0$ and $Ann(r)=0$, we have $sx=y\in A$. Then $s(A:r)\subseteq A$. 
    $(c)\Rightarrow (d):$ Let $x\in \pi^{-1}(S^{-1}A).$ Then $\pi(x)=\frac{x}{1}\in S^{-1}A$. There exists $u\in S$ such that $ux\in A.$ This implies that $x\in (A:u)$ and by $c)$ we get $sx \in A$. That is, $s\pi^{-1}(S^{-1}A)\subseteq A$.\\
    $(d)\Rightarrow (a):$ Assume that $s\pi^{-1}(S^{-1}A) \subseteq A.$ Let $wz \in A$ and $Ann(w)=0$. Then $\pi(wz)=\frac{wz}{1}\in S^{-1}A$. As $w\in S$, we have $\frac{1}{w}\cdot \frac{wz}{1}=\frac{z}{1}=\pi(z)\in S^{-1}A.$ So $z\in \pi^{-1}(S^{-1}A)$ and hence by $(d)$ we obtain $sz\in A.$
\end{proof}


 \begin{proposition}
\label{lemma:exponent}
Let $A$ be an $S-r-$ideal and $S\subseteq reg(H)$, then $\exists s\in S$ such that
$(A:s)=(A:s^n)$, for $n\geq 2.$
\end{proposition}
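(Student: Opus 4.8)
The plan is to take $s\in S$ to be exactly the element witnessing the $S-r-$ideal property of $A$, and to prove the two inclusions $(A:s)\subseteq (A:s^n)$ and $(A:s^n)\subseteq (A:s)$ separately. The first inclusion is immediate: if $ws\in A$ then $ws^n=(ws)s^{n-1}\in A$ since $A$ is an ideal, so $w\in (A:s^n)$. This holds for any choice of $s$ and any $n\geq 2$, so all the content lies in the reverse inclusion.

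For the reverse inclusion I would first record that $s\in reg(H)$, because $S\subseteq reg(H)$, and that a product of regular elements is again regular: if $x(ab)=0$ then $(xa)b=0$ forces $xa=0$, and then $Ann(a)=0$ forces $x=0$. By induction $s^n\in reg(H)$, hence $Ann(s^n)=0$ for every $n\geq 1$. This is the observation that allows me to feed the full power $s^n$ into the annihilator hypothesis of the defining property.

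Now take $w\in (A:s^n)$, so that $ws^n\in A$. The key move is to read this product as $s^n\cdot w\in A$ and to invoke the $S-r-$ideal property with $s^n$ in the role of the regular factor (legitimate because $Ann(s^n)=0$) and $w$ in the role of the second factor. The property then yields $sw\in A$, that is $w\in (A:s)$, which gives $(A:s^n)\subseteq (A:s)$ and hence the desired equality.

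I do not anticipate a genuine obstacle here; the only point requiring care is to orient the product as $ws^n=s^n w$ so that the entire power $s^n$, rather than a single copy of $s$, occupies the regular slot of the definition. Applying the property instead to $s\cdot(s^{n-1}w)$ would merely return the hypothesis $ws^n\in A$ and lead nowhere, so committing to the witness $s$ together with the correct factorization is the whole trick.
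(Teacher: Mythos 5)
Your proposal is correct and follows essentially the same argument as the paper: fix the witness $s$ of the $S$-$r$-ideal property, note $Ann(s^n)=0$, and apply the defining property to the factorization $s^n\cdot w\in A$ to get $sw\in A$, the other inclusion being immediate. The only cosmetic difference is that you prove $s^n$ is regular by hand, whereas one can also note directly that $s^n\in S\subseteq reg(H)$ since $S$ is multiplicatively closed.
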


\begin{proof}
   Let $s\in S$ satisfy the $S-r-$ideal property. Let $x\in (A:s^n).$ Then $xs^n\in A$ and since $S\subseteq reg(H)$, we have $Ann(s^n)=0.$ This implies that $xs \in A$ and so $x\in (A:s).$ As a result, we get $(A:s)\supseteq (A:s^n)$. The converse is clear.
\end{proof}

Recall from [\cite{AKA}, Prop. 1.4] that an ideal $A$ of a reduced ring $H$ is called $z^0-$ideal if for $w,z\in H$ such that $w\in A$ and $Ann(w)=Ann(z)$, then $z\in A.$ Now we define $S-z^0-$ideal in reduced rings. Let $H$ be a reduced ring and $S\subseteq H$ a m.c.s.. An ideal $A$ of $H$ is said to be $S-z^o-ideal$ if $A\cap S=\varnothing$ and $\exists s\in S$ such that for $w,z\in H$ with $Ann(w)=Ann(z)$ and $w\in A$, then $sz\in A$.


\begin{proposition}
    If $A$ is an $S-z^o-ideal$ of a reduced ring $H$, then $A$ is an $S-r-ideal$ of $H$.
\end{proposition}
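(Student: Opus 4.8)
The plan is to show that the very element $s\in S$ witnessing the $S-z^o-$ideal property also witnesses the $S-r-$ideal property, so no new choice is needed. Let $s\in S$ be such that for all $w,z\in H$ with $Ann(w)=Ann(z)$ and $w\in A$ we have $sz\in A$; recall also that $A\cap S=\varnothing$ is already part of the $S-z^o-$ideal hypothesis, which is exactly the disjointness required for an $S-r-$ideal. Thus it remains only to verify the multiplicative implication defining an $S-r-$ideal.

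First I would take arbitrary $w,z\in H$ with $wz\in A$ and $Ann(w)=0$, and establish the key identity $Ann(wz)=Ann(z)$. The inclusion $Ann(z)\subseteq Ann(wz)$ is immediate, since $xz=0$ gives $xwz=0$. For the reverse inclusion, if $x\in Ann(wz)$ then $w(xz)=xwz=0$, and because $Ann(w)=0$ this forces $xz=0$, i.e. $x\in Ann(z)$. Hence $Ann(wz)=Ann(z)$.

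With $Ann(wz)=Ann(z)$ and $wz\in A$ in hand, I would then invoke the $S-z^o-$ideal property applied to the pair $(wz,z)$, where $wz$ plays the role of the element lying in $A$ and $z$ plays the role of the element with matching annihilator. This yields $sz\in A$, which is precisely the $S-r-$ideal condition for the chosen $s$, completing the argument.

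The step carrying all the weight is the annihilator identity $Ann(wz)=Ann(z)$; once it is available, the conclusion is a one-line application of the hypothesis, so I do not anticipate a genuine obstacle. I would remark that this identity actually holds in any commutative ring whenever $w$ is regular, so reducedness of $H$ enters only through the fact that the notion of $S-z^o-$ideal is formulated for reduced rings in the first place, and it is not otherwise needed in the deduction.
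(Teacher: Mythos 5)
Your proof is correct and is essentially identical to the paper's: both establish the annihilator identity $Ann(wz)=Ann(z)$ using $Ann(w)=0$, then apply the $S$-$z^o$-ideal property to the pair $(wz,z)$ to conclude $sz\in A$. Your closing remark that reducedness plays no role beyond the setting in which $S$-$z^o$-ideals are defined is a valid observation, not a deviation from the argument.
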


\begin{proof}
    Let $s\in S$ satisfy the $S-z^o-ideal$ property for $A.$ Let $wz\in A$ with $Ann(w)=0$ for some $w,z\in H.$ It is clear that $Ann(z)\subseteq Ann(wz)$. Let $r\in Ann(wz)$, then $rwz=(rz)w=0$. It implies that $rz\in Ann(w)=0$ and so $r\in Ann(z)$. Thus $Ann(z)=Ann(wz)$. Note that $(wz)z\in A$ and $wz \in A$ and $z\in H$, it gives $sz\in A$, as desired.
\end{proof}


\begin{theorem}
    Suppose that $H$ is a ring and $L \in Min(A)$, where $A$ is an $S-r-ideal$. Then $L$ is an $S-r-ideal$ if $L\cap S=\varnothing. $
\end{theorem}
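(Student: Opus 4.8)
The plan is to reduce the entire statement to a single structural claim: that a minimal prime $L$ over an $S-r-$ideal $A$, with $L\cap S=\varnothing$, contains no regular element, i.e. $L\cap reg(H)=\varnothing$. Once this is established the $S-r-$ideal property of $L$ is almost immediate. Indeed, suppose $wz\in L$ with $Ann(w)=0$. Then $w\in reg(H)$, so $w\notin L$ by the claim; since $L$ is prime and $wz\in L$, we conclude $z\in L$, whence $sz\in L$ for every $s\in S$. Thus any witness $s\in S$ (even $s=1$) serves, and $L$ is an $S-r-$ideal (in fact an $r-$ideal). The eligibility hypothesis $L\cap S=\varnothing$ is exactly what guarantees $L$ can be an $S-r-$ideal at all. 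It is worth noting that this reduction is tight: if a prime $L$ disjoint from $S$ were an $S-r-$ideal with witness $s$ and contained a regular $w$, then applying the property to $w\cdot 1\in L$ would force $s\in L$, a contradiction.

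The heart of the argument is therefore the claim $L\cap reg(H)=\varnothing$, and here I would invoke the standard description of a minimal prime: since $L\in Min(A)$, for every $x\in L$ there exist $y\in H\setminus L$ and $n\in\mathbb{N}$ with $y\,x^n\in A$. This follows by localizing at $L$, where $LH_L=\sqrt{AH_L}$ because $LH_L$ is the only prime of $H_L$ lying over $AH_L$ (the primes of $H$ contained in $L$ and containing $A$ reduce to $L$ itself by minimality). Suppose, toward a contradiction, that some $w\in L$ has $Ann(w)=0$. A short induction shows that $Ann(w^n)=0$ as well, since a power of a regular element is again regular. Applying the minimal-prime description to $w$ then yields $y\notin L$ and $n\in\mathbb{N}$ with $w^n\,y\in A$ and $Ann(w^n)=0$.

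Now I would feed this into the $S-r-$ideal property of $A$: letting $s\in S$ be the witness for $A$, the relation $w^n y\in A$ together with $Ann(w^n)=0$ forces $sy\in A\subseteq L$. Since $L$ is prime and $y\notin L$, we obtain $s\in L$, contradicting $s\in S$ and $L\cap S=\varnothing$. This contradiction establishes $L\cap reg(H)=\varnothing$, and combined with the first paragraph completes the proof.

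The only genuinely delicate ingredient is the minimal-prime characterization $x\in L\Rightarrow\exists\,y\notin L,\ n\in\mathbb{N}:\ y\,x^n\in A$; every other step is a direct chain of implications. I expect this to be the main obstacle to a fully self-contained write-up, and I would either cite it (it is part of the treatment of minimal primes and $r-$ideals in \cite{M}) or dispatch it quickly via the localization argument sketched above.
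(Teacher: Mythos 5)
Your proof is correct, and it shares the paper's key ingredient --- the minimal-prime characterization (cited in the paper as \cite[Theorem 1.2]{Huck}: for $x\in L\in Min(A)$ there exist $y\notin L$ and $n$ with $yx^n\in A$) combined with the $S-r-$property of $A$ and primality of $L$ --- but organizes the argument differently. The paper runs a single pass: given $wz\in L$ with $Ann(w)=0$, it applies the characterization to $wz$ itself, obtaining $x(wz)^n=w^n(xz^n)\in A$ with $x\notin L$, then uses the $S-r-$property of $A$ to get $s^nxz^n\in A\subseteq L$, and finishes with primality to extract $(sz)^n\in L$ and hence $sz\in L$; the witness for $L$ is thus the witness $s$ inherited from $A$. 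You instead isolate the structural claim $L\cap reg(H)=\varnothing$ by applying the same characterization to a hypothetical regular $w\in L$ and deriving the contradiction $s\in L$, and then conclude by the ``prime ideal contained in $zd(H)$'' argument --- which is precisely the nontrivial direction of the paper's Theorem \ref{theorem1}, stated immediately after this result. Your decomposition buys a strictly stronger conclusion: $L$ is an honest $r-$ideal (any $s\in S$, even $s=1$, serves as witness), which is consistent with the paper's own remark following Theorem \ref{theorem1} that for prime ideals disjoint from $S$ the notions of $r-$ideal and $S-r-$ideal coincide. The paper's version is shorter and constructive (no contradiction needed), but yields only the $S-$weakened conclusion.
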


\begin{proof}
    Assume that $L\cap S=\varnothing$ and $ wz \in L$ with $Ann(w)=0$ for some $w,z\in H$. By \cite[Theorem 1.2]{Huck}, there exists $x \notin L$ and $n\in \mathbb{N}$ such that $x(wz)^n=xw^nz^n \in A.$ Since $Ann(w)=0$ and $A$ is an $S-r-ideal$, $s^nxz^n\in A\subseteq L$ for some $s\in S.$ As $x\notin L$, we obtain $s^nz^n\in L$ and so $(sz)^n \in L$. Hence $sz\in L,$ as desired.
    
\end{proof}

\begin{theorem} \label{theorem1}
    Suppose that $H$ is a ring and $A$ is a prime ideal of $H$ with $A\cap S=\varnothing.$ Then, $A$ is an $S-r-$ideal if and only if $A\subseteq zd(H)$.
\end{theorem}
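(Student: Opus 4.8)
The plan is to treat the two implications separately, using throughout the observation that $A\subseteq zd(H)$ is exactly the statement that $A$ contains no regular element, i.e. $A\cap reg(H)=\varnothing$, since every element of $H$ is either regular or a zero divisor.

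First I would prove that every $S-r-$ideal $A$ (prime or not) satisfies $A\subseteq zd(H)$. Let $s\in S$ be a witness for the $S-r-$ideal property, and suppose toward a contradiction that some regular $w\in A$ exists, so that $Ann(w)=0$. Specializing the defining condition to $z=1$, the relation $w\cdot 1=w\in A$ together with $Ann(w)=0$ forces $s\cdot 1=s\in A$. But $s\in S$ and $A\cap S=\varnothing$, a contradiction. Hence $A$ has no regular element and $A\subseteq zd(H)$. I want to emphasize that this direction never uses primeness; it holds for an arbitrary $S-r-$ideal.

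For the converse I would assume $A$ is prime with $A\subseteq zd(H)$ and verify the $S-r-$ideal condition with the trivial witness $s=1\in S$. Given $wz\in A$ with $Ann(w)=0$, primeness yields $w\in A$ or $z\in A$. The first alternative is ruled out because $w$ is regular while, by hypothesis, every element of $A$ is a zero divisor; hence $z\in A$ and so $sz=z\in A$. This shows $A$ is an $S-r-$ideal (indeed even an $r-$ideal).

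Both parts are elementary, so I do not anticipate a serious obstacle; the one step worth isolating is the specialization $z=1$ in the forward direction, which converts the abstract presence of a regular element of $A$ into the concrete contradiction $s\in A\cap S$. Recognizing that the universal quantifier over $z$ in the definition permits this choice is the crux of the argument, and dually that primeness is exactly what is needed (and only needed) to force $z\in A$ in the converse.
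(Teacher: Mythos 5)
Your proposal is correct and follows essentially the same route as the paper: the forward direction specializes the $S$-$r$-ideal condition to $z=1$ to derive the contradiction $s\in A\cap S$, and the converse uses primeness plus $A\subseteq zd(H)$ to rule out $w\in A$ and conclude $z\in A$. Your two side remarks (that the forward direction needs neither primeness nor disjointness beyond the definition, and that the converse actually yields an $r$-ideal) are also made by the paper, in the corollary and the remark immediately following the theorem.
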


\begin{proof}
   $\Rightarrow:$ Suppose that $A\nsubseteq zd(H)$, then $\exists w\in A$ with $Ann(w)=0$. So we have $w\cdot 1 \in A$ and this implies that $s\cdot 1=s\in A$, a contradiction.\\
   $\Leftarrow:$ Let $wz \in A$ and $Ann(w)=0$, then $w$ is a regular element and by hypothesis $w\notin A$. Since $A$ is prime, we get $z\in A$ and so $sz\in A$, as desired.
\end{proof}

Note that Mohamadian [\cite{M}, Remark 2.3.(f)] showed that a prime ideal is an $r-$ideal if and only if it consists entriely of zero-divisors. Thus, we conclude that if $A$ is a prime ideal with $A\cap S=\varnothing$, then $A$ is an $r-$ideal if and only if $A$ is an $S-r-$ideal.

\begin{remark} \label{Remark1}
   Note that as $A\subseteq zd(H)$, by \cite[Chapter 1, Exercise 14]{Atiyah} , we can find a prime ideal $P$  such that $A\subseteq P \subseteq zd(H)$. By Theorem $\ref{theorem1}$ we get $P$ is an $S-r-$ideal if $P\cap S=\varnothing.$
\end{remark}

\begin{corollary}
    If $A=zd(H)$ is a prime ideal and $A\cap S=\varnothing$, then $A$ is an $S-r-ideal$ of $H.$
\end{corollary}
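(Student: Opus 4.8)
The plan is to obtain this as an immediate consequence of Theorem \ref{theorem1}, since the hypotheses of that theorem are exactly what we are handed here. Recall that Theorem \ref{theorem1} asserts: for a prime ideal $A$ of $H$ with $A\cap S=\varnothing$, the ideal $A$ is an $S-r-$ideal \emph{if and only if} $A\subseteq zd(H)$.

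First I would verify that the three hypotheses of Theorem \ref{theorem1} hold in the present setting. By assumption $A=zd(H)$ is a prime ideal, so $A$ is prime, and we are explicitly given $A\cap S=\varnothing$. Thus both standing hypotheses of the theorem are met, and the biconditional it provides is available to us.

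The remaining step is to check the right-hand condition of the equivalence, namely $A\subseteq zd(H)$. But this is trivial: since $A=zd(H)$ by hypothesis, the inclusion is in fact an equality, so certainly $A\subseteq zd(H)$. Applying the ``if'' direction of Theorem \ref{theorem1} then yields that $A$ is an $S-r-$ideal, which is the desired conclusion.

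There is essentially no obstacle here; the content of the statement is entirely carried by Theorem \ref{theorem1}, and this corollary simply records the special case $A=zd(H)$, where the containment $A\subseteq zd(H)$ is automatic. The only point worth flagging is that one must ensure the hypothesis $A\cap S=\varnothing$ is genuinely part of the statement (which it is), since it is needed to invoke the theorem.
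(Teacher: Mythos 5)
Your proposal is correct and is exactly the paper's argument: the paper proves this corollary by citing Theorem \ref{theorem1}, and you have simply spelled out the (trivial) verification that $A=zd(H)$ satisfies the containment $A\subseteq zd(H)$ required by that theorem's ``if'' direction.
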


\begin{proof}
Follows from Theorem \ref{theorem1}.
\end{proof}

The following corollary is presented in [\cite{MH}, Proposition 2.3], also it can be derived from Theorem \ref{theorem1}.

\begin{corollary}
  If $A$ is an $S-r-ideal$, then $A\subseteq zd(H).$   
\end{corollary}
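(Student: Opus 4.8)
The plan is to argue by contradiction, exploiting the choice $z=1$ in the defining condition of an $S-r-$ideal. Suppose, contrary to the claim, that $A\nsubseteq zd(H)$. Then $A$ contains an element that is not a zero divisor, i.e. there is $w\in A$ with $Ann(w)=0$. This single regular element inside $A$ is all I need.

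Next I would feed the trivial factorization $w=w\cdot 1$ into the $S-r-$ideal condition. Since $w\cdot 1\in A$ and $Ann(w)=0$, the definition furnishes some $s\in S$ with $s\cdot 1=s\in A$. But the definition of an $S-r-$ideal also demands $A\cap S=\varnothing$, so the membership $s\in A\cap S$ is the contradiction that closes the argument. Consequently no regular element can belong to $A$, which is precisely the assertion $A\subseteq zd(H)$.

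I do not anticipate any real obstacle: the whole point is the observation that the forward implication of Theorem \ref{theorem1} never actually invoked primeness of $A$. That proof only combined the disjointness $A\cap S=\varnothing$ with the regular-element test applied to $w\cdot 1$, and both ingredients are present for an arbitrary $S-r-$ideal. Hence the corollary follows by exactly the reasoning of the $(\Rightarrow)$ direction of Theorem \ref{theorem1}, simply read with the primeness hypothesis dropped; alternatively one may cite [\cite{MH}, Proposition 2.3] directly.
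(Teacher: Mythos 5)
Your proof is correct and takes essentially the same route as the paper: the paper's own proof simply observes that the corollary follows from the forward direction of Theorem \ref{theorem1} because primeness is never used there, which is exactly your argument. Spelling out the $w=w\cdot 1$ factorization and the contradiction $s\in A\cap S$ (disjointness being built into the definition of an $S-r-$ideal) fills in the same steps the paper leaves implicit.
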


\begin{proof}
    
    Follows from Theorem \ref{theorem1} and the conditions $A$ is prime with $A\cap S=\varnothing$ is not needed for this directions.
\end{proof}

The following proposition characterizes $r-$ideals in terms of $S-r$ideals.

\begin{proposition}\label{Prop 2.14}
    Let $A\subseteq J(H)$, where $J(H)$ is the Jacopson radical of $H.$ Then the following statements are equivalent:\\
    1) $A$ is an $r-ideal$ of $H$.\\
    2) $A$ is an $(H\setminus M)-r-$ideal for every maximal ideal $M$ of $H.$
\end{proposition}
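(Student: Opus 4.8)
The plan is to exploit the hypothesis $A\subseteq J(H)$, which forces $A\subseteq M$, and hence $A\cap(H\setminus M)=\varnothing$, for every maximal ideal $M$. This is exactly the disjointness condition needed to make statement $(2)$ well-posed, since $H\setminus M$ is a m.c.s. (being the complement of the prime ideal $M$). I record this observation first, as it is used silently in both directions.

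For the implication $(1)\Rightarrow(2)$, I would argue directly. Fix a maximal ideal $M$ and take the witness $s=1\in H\setminus M$ (legitimate because $M$ is proper, so $1\notin M$). If $wz\in A$ with $Ann(w)=0$, then the $r$-ideal hypothesis already yields $z\in A$, whence $sz=z\in A$. Since the same $s=1$ serves for all pairs $w,z$, the ideal $A$ is an $(H\setminus M)$-$r$-ideal for every $M$. This direction carries no real difficulty.

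The substance lies in $(2)\Rightarrow(1)$. Suppose $wz\in A$ with $Ann(w)=0$; the goal is $z\in A$. For each maximal ideal $M$, statement $(2)$ provides some $s_M\in H\setminus M$ with $s_M z\in A$. The key step is to collect these witnesses into the single ideal quotient $(A:z)=\{r\in H:\ rz\in A\}$. Each $s_M$ lies in $(A:z)$ but not in $M$, so $(A:z)\nsubseteq M$ for every maximal ideal $M$. An ideal contained in no maximal ideal must equal the whole ring, so $(A:z)=H$; in particular $1\in(A:z)$, giving $z=1\cdot z\in A$, as required.

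I expect the main (modest) obstacle to be recognizing that the $M$-dependent witnesses $s_M$ should be assembled into the fixed ideal $(A:z)$, and that the phrase \emph{escapes every maximal ideal} is precisely the assertion $(A:z)=H$. Once this reformulation is in hand, no computation remains.
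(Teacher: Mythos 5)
Your proposal is correct and follows essentially the same route as the paper: the disjointness observation from $A\subseteq J(H)$ and the witness $s=1$ for $(1)\Rightarrow(2)$, and for $(2)\Rightarrow(1)$ the collection of the witnesses $s_M$, which escape every maximal ideal, to conclude that the relevant ideal is all of $H$. The only (cosmetic) difference is that you package the witnesses in the colon ideal $(A:z)$, so $1\in(A:z)$ gives $z\in A$ at once, whereas the paper forms the ideal generated by the witnesses and writes out $z=\sum_i r_i(s_{m_i}z)\in A$ explicitly.
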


\begin{proof}
    $(1) \Rightarrow (2):$ It is enough to note that as $A\subseteq J(H)$, we have $A\subseteq M,$ and so $A\cap (H\setminus M)=\varnothing$ for every maximal ideal $M.$ Since every $r-ideal$ is an $(H\setminus M)-r-$ideal, the result follows.\\
    $(2)\Rightarrow (1):$ Assume that $wz \in A$ with $Ann(w)=0$ for some $w,z\in H$. By hypothesis, $\forall M$ maximal ideal of $H,\; \exists s_m \in (H\setminus M)$ such that $s_m \cdot z \in A.$ Now, let the set $B$ be $\{s_m:\; \exists\; maximal\;ideal\;M,\;s_m\notin M\;and \;s_m z\in A \}$. We claim that $<B>=H.$ Assume that $<B>\neq H$, then $\exists\;maximal$ ideal $M^*$ such that $<B>\subseteq M^*$ with   $s^*\in <B>\subseteq M^* $ for some $ s^* \notin M^*$, a contradiction. Now, we use the claim and complete the proof. As $1\in <B>$, this implies that $1=r_1s_{m_1}+\cdots+r_ks_{m_k}$ and we can write $$z=z\cdot \sum_{i=1}^kr_is_{m_i}=\sum_{i=1}^kr_i(zs_{m_i})$$ and so $z\in A$ since $zs_{m_i}\in A$, as desired.
\end{proof}


\begin{proposition}\label{zero.ideal}
    The zero ideal is always an $S-r-$ideal.
\end{proposition}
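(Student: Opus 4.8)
The plan is to verify the defining condition directly, using the simplest possible witness $s=1\in S$. First I would record that the disjointness requirement $A\cap S=\varnothing$ is satisfied for $A=\{0\}$: under the standing convention a multiplicatively closed set does not contain $0$ (otherwise $S^{-1}H=0$ and the entire $S$-localization theory degenerates), so $\{0\}\cap S=\varnothing$. This dispenses with the hypothesis built into the definition of an $S-r-$ideal.

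For the substantive part I would take $s=1$ and let $w,z\in H$ satisfy $wz\in A=\{0\}$ together with $Ann(w)=0$. Then $wz=0$, and by commutativity $zw=0$, so $z\in Ann(w)$. Since $Ann(w)=0$ this forces $z=0$, whence $sz=1\cdot z=0\in A$. This is precisely the $S-r-$ideal condition, so the zero ideal qualifies.

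I do not expect any genuine obstacle here; the computation above actually shows more, namely that the zero ideal is an honest $r-$ideal (this is the $s=1$ instance), and since every $r-$ideal disjoint from $S$ is automatically an $S-r-$ideal, the claim follows a fortiori. The only point worth flagging explicitly is the convention $0\notin S$, which is what makes the disjointness hypothesis $A\cap S=\varnothing$ hold for $A=\{0\}$ in the first place.
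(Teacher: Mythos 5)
Your proof is correct and follows essentially the same route as the paper's: both rest on the observation that $wz=0$ together with $Ann(w)=0$ forces $z\in Ann(w)=0$, hence $z=0$ and $sz=0\in(0)$ for any admissible $s$ (the paper notes this works for every $s\in S$, you simply fix $s=1$). Your explicit remark that the disjointness hypothesis $\{0\}\cap S=\varnothing$ relies on the convention $0\notin S$ is a point the paper's proof silently omits, so it is a welcome bit of extra care rather than a divergence in method.
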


\begin{proof}
    Let $H$ be a ring, $S$ be a m.c.s. of $R$. For $w,z\in H,\;\; wz\in (0)$  and $Ann(w)=0.$ Since $wz=0$ and so $z\in Ann(w)$. Hence $z=0$ and so for every $s\in S$, $sz\in (0)$.
\end{proof}

\begin{proposition}\label{Ann(S)}
    If $A$ is an $S-r-$ideal, $K\subseteq H$ and $K\nsubseteq A$, in this case if $(A:K)\cap S=\varnothing,$ then $(A:K)$ is an $S-r-$ideal . Moreover, if $Ann(K)\cap S=\varnothing$, then $Ann(K)$ is always an $S-r-$ideal.
\end{proposition}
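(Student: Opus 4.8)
The plan is to settle the two claims separately, deriving the second from the first by specializing to $A=(0)$.

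\emph{First claim.} Let $s\in S$ be an element witnessing that $A$ is an $S-r-$ideal, so that $wz\in A$ together with $Ann(w)=0$ forces $sz\in A$. I would show that the \emph{same} $s$ works for $(A:K)$. First, since $1\in S$ and $(A:K)\cap S=\varnothing$, we have $1\notin(A:K)$, so $(A:K)$ is a proper ideal disjoint from $S$ (in particular $K\nsubseteq A$ is automatic, since $1\in(A:K)$ would mean $K\subseteq A$). Now take $wz\in(A:K)$ with $Ann(w)=0$. The crucial step is to unfold the colon ideal pointwise: for every $k\in K$ we get $w(zk)=(wz)k\in A$, and because $Ann(w)=0$ the defining property of $A$ yields $s(zk)=(sz)k\in A$. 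Letting $k$ range over $K$ gives $(sz)K\subseteq A$, i.e. $sz\in(A:K)$, which is exactly the $S-r-$ideal condition for $(A:K)$.

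\emph{Second claim.} Here I would observe that $Ann(K)=((0):K)$ and invoke Proposition \ref{zero.ideal}, which guarantees that the zero ideal is an $S-r-$ideal. The hypothesis $Ann(K)\cap S=\varnothing$ forces $K\nsubseteq(0)$, since $K=\{0\}$ would give $Ann(K)=H$ and hence $Ann(K)\cap S=S\neq\varnothing$. Thus the first claim applies verbatim with $A=(0)$, so $Ann(K)=((0):K)$ is an $S-r-$ideal.

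I do not anticipate a serious obstacle; the single delicate point is the quantifier handling in the first claim — one must apply the $S-r-$ideal property of $A$ to each product $zk$ individually and only afterwards reassemble those memberships into the statement $(sz)K\subseteq A$. Everything else is a direct verification that $(A:K)$ is a proper ideal meeting the definition, plus the routine observation that $Ann(K)$ is a colon ideal over the zero ideal.
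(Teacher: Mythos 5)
Your proof is correct and takes essentially the same route as the paper's: reduce membership of $sz$ in $(A:K)$ to showing $(sz)K\subseteq A$, then obtain the $Ann(K)$ statement by specializing to $A=(0)$ and invoking Proposition \ref{zero.ideal}. The only difference is that where the paper cites \cite{MH} (Theorem 2.1) to pass from $wzK\subseteq A$ to $szK\subseteq A$, you verify this elementwise using the fact that the witness $s$ in the definition is uniform over all pairs, which makes the argument self-contained; your observation that $K\nsubseteq A$ is already implied by $(A:K)\cap S=\varnothing$ is also a valid (minor) sharpening.
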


\begin{proof}
Let $wz\in (A:K)$ and $Ann(w)=0$. Then $wzK\subseteq A$. By hypothesis and [\cite{MH},Theorem 2.1] we have $szK\in A$ and so $sz\in (A:K)$, as desired. Especially if we take $A=(0)$, then $Ann(K)$ is always an $S-r-$ideal by Proposition \ref{zero.ideal}.
\end{proof}

\begin{proposition}
    Take $H$ to be a ring such that every ideal $A$ of $H$ is an annihilator ideal (i.e. for every ideal of $A$ there exists $T \subseteq R$ such that $A=Ann(T)$) with $Ann(T)\cap S=\varnothing$ then every ideal of $H$ is an $S-r-$ideal. Moreover, for any two ideals $A$ and $B$ in the ring $H$, there exists an ideal $K$ such that $Ann(A)+Ann(B)=Ann(K)$, in this case $Ann(A)+Ann(B)$ is an $S-r-$ideal.
\end{proposition}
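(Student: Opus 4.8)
The plan is to reduce both claims to the ``moreover'' clause of Proposition \ref{Ann(S)}, which already asserts that any annihilator ideal $Ann(K)$ with $Ann(K)\cap S=\varnothing$ is an $S-r-$ideal. That clause itself rests on Proposition \ref{zero.ideal}, since $Ann(K)=(0:K)$ and the zero ideal is an $S-r-$ideal; so the substantive content I need is entirely pre-packaged and the work is bookkeeping about which ideals avoid $S$.

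For the first statement, I would take an arbitrary ideal $A$ of $H$. By hypothesis $A=Ann(T)$ for some $T\subseteq H$ with $Ann(T)\cap S=\varnothing$. Applying the moreover part of Proposition \ref{Ann(S)} with this $T$ immediately yields that $A=Ann(T)$ is an $S-r-$ideal. Since $A$ was arbitrary, every ideal of $H$ is an $S-r-$ideal, which is what the first assertion claims.

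For the second statement, I would first note that $Ann(A)+Ann(B)$ is an ideal of $H$. Because the standing hypothesis forces every ideal to be an annihilator ideal, there exists $K\subseteq H$ with $Ann(A)+Ann(B)=Ann(K)$; this is precisely the asserted existence claim. Provided $Ann(K)\cap S=\varnothing$ --- the disjointness from $S$ that is built into the annihilator-ideal hypothesis and is in any case necessary for membership in the class of $S-r-$ideals --- one more appeal to the moreover clause of Proposition \ref{Ann(S)} shows that $Ann(K)=Ann(A)+Ann(B)$ is an $S-r-$ideal.

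The only point that needs care, and the main thing I would pin down, is the disjointness condition: since $H$ is itself an ideal and $1\in S$, one cannot literally demand that every ideal miss $S$, so the hypothesis should be read as supplying the $S-r-$conclusion exactly for those annihilator ideals that happen to avoid $S$. Once this reading is fixed, both parts are immediate corollaries of Proposition \ref{Ann(S)}, and no direct manipulation of the defining $S-r-$property is required.
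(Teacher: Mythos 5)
Your proposal is correct and follows exactly the paper's own route: the paper's proof is literally ``Follow by Proposition \ref{Ann(S)}'', i.e., both the claim that every (annihilator) ideal disjoint from $S$ is an $S-r-$ideal and the claim about $Ann(A)+Ann(B)=Ann(K)$ are read off from the ``moreover'' clause of that proposition, which in turn rests on the zero ideal being an $S-r-$ideal. Your additional remark about how to read the disjointness hypothesis (since $H$ itself is an ideal meeting $S$) is a sensible clarification but does not change the argument.
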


\begin{proof}
    Follow by Proposition \ref{Ann(S)}.
\end{proof}

\begin{proposition}
    Assume that $K_1$ and $K_2$ are two subsets of $H$ such that $\exists t\in S$ with $K_1+K_2=Ht$. Let $K=Ann(K_1)+Ann(K_2)$. If $K\cap S=\varnothing$. then $K$ is an $S-r-$ideal.
\end{proposition}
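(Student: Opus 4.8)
The plan is to reduce the statement to the already-established fact recorded in the final assertion of Proposition \ref{Ann(S)}, namely that an annihilator ideal disjoint from $S$ is automatically an $S-r-$ideal. First I would set up the two structural observations on which everything rests. Since $Ann(K_1)\subseteq K$ and $Ann(K_2)\subseteq K$, and $K\cap S=\varnothing$ by hypothesis, both $Ann(K_1)$ and $Ann(K_2)$ are disjoint from $S$; hence by the last assertion of Proposition \ref{Ann(S)} each is an $S-r-$ideal, and I fix witnesses $s_1,s_2\in S$ for them. Next, because $t=1\cdot t\in Ht=K_1+K_2$, I may write $t=k_1+k_2$ with $k_1\in K_1$ and $k_2\in K_2$; this decomposition of the distinguished element $t$ is the device that lets me pass between the two annihilators.

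Now I would carry out the main argument. Take $w,z\in H$ with $wz\in K$ and $Ann(w)=0$, and write $wz=a_1+a_2$ with $a_1\in Ann(K_1)$ and $a_2\in Ann(K_2)$. Multiplying by $k_2$ kills the second summand, giving $w(zk_2)=a_1k_2$, and a one-line check shows $a_1k_2\in Ann(K_1)$, since for $k\in K_1$ one has $(a_1k_2)k=k_2(a_1k)=0$. Symmetrically $w(zk_1)=a_2k_1\in Ann(K_2)$. At this point I have, for the \emph{same} regular $w$, two membership relations $w(zk_2)\in Ann(K_1)$ and $w(zk_1)\in Ann(K_2)$. Applying the $S-r-$ideal property of $Ann(K_1)$ and of $Ann(K_2)$ (this is exactly where $Ann(w)=0$ is used) yields $s_1(zk_2)\in Ann(K_1)$ and $s_2(zk_1)\in Ann(K_2)$. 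Finally I recombine along $t=k_1+k_2$:
\[
s_1s_2tz=s_1\bigl(s_2zk_1\bigr)+s_2\bigl(s_1zk_2\bigr)\in Ann(K_2)+Ann(K_1)=K,
\]
so that $s^{\ast}z\in K$ with $s^{\ast}:=s_1s_2t\in S$. Since $s_1,s_2,t$ were fixed before choosing $w,z$, the witness $s^{\ast}$ is uniform, which is what the definition of an $S-r-$ideal requires.

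The step I expect to be the main obstacle is the passage from $wz\in K$ to a conclusion about $z$: one is tempted to ``cancel'' the regular element $w$, but $H$ need not be a domain, so no literal cancellation is available. The resolution, and the heart of the proof, is not to cancel at all, but to multiply the relation $wz=a_1+a_2$ by the components $k_1,k_2$ of $t$, thereby isolating one annihilator summand at a time and transferring the problem onto $Ann(K_1)$ and $Ann(K_2)$, which are \emph{already known} to be $S-r-$ideals. The only computational subtleties are verifying the two containments $a_1k_2\in Ann(K_1)$ and $a_2k_1\in Ann(K_2)$, and checking that the assembled witness $s_1s_2t$ is independent of $w$ and $z$.
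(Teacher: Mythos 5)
Your proof is correct, but it takes a different route from the paper's. The paper argues directly: writing $wz=a_1+a_2$ with $a_i\in Ann(K_i)$, it multiplies by the whole product set $K_1K_2$ so that both summands die at once ($wzK_1K_2=0$), cancels the regular element $w$ in a single stroke via $zK_1K_2\subseteq Ann(w)=0$, and then reads off $zK_1\subseteq Ann(K_2)$ and $zK_2\subseteq Ann(K_1)$, so that $zt\in z(K_1+K_2)\subseteq Ann(K_2)+Ann(K_1)=K$; the hypothesis element $t$ itself is the witness, with no auxiliary choices. You instead decompose $t=k_1+k_2$ with $k_i\in K_i$, isolate one summand of $wz=a_1+a_2$ at a time by multiplying by a single $k_i$, and delegate the cancellation of $w$ to the last assertion of Proposition \ref{Ann(S)} applied to $Ann(K_1)$ and $Ann(K_2)$, recombining into the witness $s_1s_2t$. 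What the paper's route buys is brevity, self-containedness, and the sharpest witness; what yours buys is the observation that the statement is a formal consequence of the earlier proposition, using only its stated conclusion rather than any fresh computation with $Ann(w)$. Note, incidentally, that your auxiliary witnesses $s_1,s_2$ can both be taken equal to $1$: if $wy\in Ann(K_i)$ with $Ann(w)=0$, then $yK_i\subseteq Ann(w)=0$, so each $Ann(K_i)$ is in fact an $r$-ideal; making this simplification collapses your witness $s_1s_2t$ to $t$ and essentially recovers the paper's argument.
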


\begin{proof} \label{Ann(A)}
Let $wz\in K$ such that $Ann(w)=0$. In this case there exists $k_1\in Ann(K_1)$ and $k_2\in Ann(K_2)$ such that $wz=k_{1}+k_{2}$, $k_1K_1=0$ and $k_2K_2=0$. So, $wzK_1K_2=(k_1+k_2)K_1K_2=k_1K_1K_2+k_2K_1K_2=0$ and hence $zK_1K_2\subseteq Ann(w)$ and thereby $zK_1K_2 = 0.$ As a result, $zHt=z(K_1+K_2)=zK_1+zK_2\subseteq Ann(K_1)+Ann(K_2)=K$ and hence $zt\in K,$ as desired.
\end{proof}

\begin{corollary}
 Take $H$ be a ring and $w,z\in H$ such that $w+z=t$, where $t\in S$. If $A=Ann(w)+Ann(z)$  with $A\cap S=\varnothing$, then $A$ is an $S-r-$ideal.
\end{corollary}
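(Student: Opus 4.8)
The plan is to realize $A$ as the special case of the preceding proposition in which the two subsets are the singletons $K_1=\{w\}$ and $K_2=\{z\}$. For a single element one has $Ann(\{w\})=Ann(w)$ and $Ann(\{z\})=Ann(z)$, so the ideal $Ann(K_1)+Ann(K_2)$ appearing there is exactly $A=Ann(w)+Ann(z)$; the standing disjointness hypothesis $A\cap S=\varnothing$ matches the requirement $K\cap S=\varnothing$, and the hypothesis $w+z=t\in S$ supplies the distinguished element of $S$. I expect $s=t$ itself to serve as the witness in the definition of an $S-r-$ideal.

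With this identification, I would simply rerun the computation from the proposition. Starting from $xy\in A$ with $Ann(x)=0$, write $xy=c+d$ with $cw=0$ and $dz=0$, and multiply through by $wz$ to annihilate both summands:
$$xy\,wz=(c+d)wz=(cw)z+(dz)w=0.$$
Since $Ann(x)=0$, the element $x$ is regular, so $x(ywz)=0$ forces $ywz=0$. Reading this as $(yw)z=0$ and $(yz)w=0$ gives $yw\in Ann(z)$ and $yz\in Ann(w)$, and therefore
$$ty=y(w+z)=yw+yz\in Ann(z)+Ann(w)=A,$$
which is precisely $sy\in A$ with $s=t\in S$.

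The one genuine subtlety — and the reason I would not quote the proposition word for word — lies in its standing hypothesis $K_1+K_2=Ht$. For singletons one only gets $K_1+K_2=\{w+z\}=\{t\}$, which contains $t$ but is far from all of $Ht$, so the hypothesis is not literally satisfied. The main point I would therefore verify is that the proof of the proposition never uses the full equality: the identity $K_1+K_2=Ht$ enters only to guarantee $t\in K_1+K_2$, so that $ty$ lands in $y(K_1+K_2)\subseteq Ann(K_1)+Ann(K_2)$. Once this is observed the specialization is legitimate, and I would either cite the proposition after recording this remark or, for a self-contained argument, present the short computation above directly.
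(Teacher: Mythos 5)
Your proposal is correct and takes essentially the same route as the paper: the paper's entire proof is the one-line citation ``Follow by Proposition \ref{Ann(A)}'', and the computation you rerun with $K_1=\{w\}$, $K_2=\{z\}$ is precisely the computation in that proposition's proof, with $s=t$ as the uniform witness. Your further observation is also accurate and worth having: the proposition's stated hypothesis $K_1+K_2=Ht$ is not literally satisfied by singletons (one only gets $\{w\}+\{z\}=\{t\}$), but its proof uses nothing beyond $t\in K_1+K_2$, so your direct verification legitimately patches this small imprecision in the paper's citation.
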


\begin{proof}
    Follow by Proposition \ref{Ann(A)}.
\end{proof}

\begin{proposition}
    
     Let $H$ be a reduced ring, $P\in Min(H)$ and $e\in H$ be an idempotent element. If $A=P+Ann(se)$ with $A\cap S=\varnothing$ and $s\in S$, then $A$ is an $S-r-$ideal.
\end{proposition}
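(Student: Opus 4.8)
The plan is to show that the given element $s \in S$ is itself a witness for $A = P + Ann(se)$ being an $S-r-$ideal; that is, whenever $wz \in A$ with $Ann(w) = 0$, I will deduce $sz \in A$. The two structural ingredients are the behaviour of regular elements relative to minimal primes in a reduced ring, and the splitting of $H$ furnished by the idempotent $e$.

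First I would record the key preliminary facts. In a reduced ring every minimal prime consists entirely of zero divisors (see \cite{Huck}), so a regular element cannot lie in $P$: since $P \in Min(H)$, $H$ is reduced, and $Ann(w) = 0$, we get $w \notin P$. I would also note that $(1-e) \in Ann(se)$, because $se(1-e) = s(e - e^2) = 0$; hence $Ann(se)$, being an ideal, contains $z(1-e)$ for every $z \in H$.

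Next comes the main computation. Given $wz \in A$ with $Ann(w) = 0$, write $wz = p + a$ with $p \in P$ and $a \in Ann(se)$, so that $a \cdot se = 0$. Multiplying by $se$ annihilates the second summand and yields $wzse = pse \in P$. Reading this as $w(zse) \in P$ and using $w \notin P$ together with the primeness of $P$, I conclude $zse \in P$, i.e. $s(ze) \in P \subseteq A$.

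Finally I would reassemble $z$ through the idempotent decomposition $z = ze + z(1-e)$, giving $sz = s(ze) + s\big(z(1-e)\big)$. The first term lies in $P \subseteq A$ by the previous step, while $z(1-e) \in Ann(se) \subseteq A$ by the preliminary observation, so the second term lies in $A$ as well; hence $sz \in A$, as required. The main obstacle is the first move of the computation, namely recognising that multiplying by $se$ collapses $A$ onto the prime $P$ and that the regularity of $w$ forces $w \notin P$ (which is exactly where the reducedness and minimality of $P$ enter); once this is in place, the idempotent decomposition handles the remaining summand routinely.
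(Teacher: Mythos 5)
Your proof is correct and follows essentially the same route as the paper's: the same witness $s$, the same decomposition of the product along $A = P + \mathrm{Ann}(se)$, the same key fact that minimal primes of a reduced ring consist of zero divisors, and the same idempotent finish $sz = sze + sz(1-e) \in P + \mathrm{Ann}(se)$. The only difference is in execution: the paper invokes \cite[Corollary 2.2]{Huck} to produce an auxiliary $t \notin P$ annihilating the $P$-component, then cancels the regular element and $t$ in turn, whereas you convert regularity into $w \notin P$ once and cancel $w$ directly by the primeness of $P$ --- a slightly more economical packaging of the same argument.
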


\begin{proof}

     Take $rx\in A$ such that $Ann(r)=0$ and $x\in H.$ Therefore, $rx=w+z$, where $w\in P$ and $sze=0$. Obviously, there exists $t \notin P$ such that $swt=0$ by \cite[Corollar 2.2]{Huck}. Thereby, $setrx=0$, we obtain $setx=0$ and so $sex\in P.$ Then $sx=sex+(1-e)sx\in P+Ann(se)$, and so $A$ is an $S-r-$ideal.
    
\end{proof}

Let $f:H\rightarrow H'$ be a ring map and $S\subseteq H$ be m.c. subset an element $a\in H'$ is said to be $S-$idempotent if $a^2=f(s)a$ for some $s\in S$ [\cite{ETY}. Def. 2.1]. So, every idempotent element is an $S-$idempotent [\cite{ETY}. Def. 2.1].

\begin{proposition}
Assume that $s=\{s_1,\cdots,s_m \}$ is a m.c. subset of $H$ and $s=s_1\cdots s_m$. Take $T=\{a_i\in H; \; a_i^2=sa_i \; and \; i\in \Delta\}$. In this case, if $A\cap S=\varnothing$, then $A=\sum_{i\in A}a_iH$ is an $S-r-$ideal.
\end{proposition}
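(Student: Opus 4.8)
The plan is to exhibit a single element of $S$ --- concretely a power of $s=s_1\cdots s_m$, which lies in $S$ since $S$ is multiplicatively closed --- witnessing the $S-r-$ideal property for $A=\sum_{i\in\Delta}a_iH$. First I would take $w,z\in H$ with $wz\in A$ and $Ann(w)=0$, and exploit that membership in a sum of principal ideals involves only finitely many summands: write $wz=\sum_{i\in F}a_ih_i$ for some finite $F\subseteq\Delta$ and $h_i\in H$. This reduces the problem to the finitely generated subideal $A_F=\sum_{i\in F}a_iH\subseteq A$.

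The key algebraic observation is that each generator satisfies $a_i(s-a_i)=sa_i-a_i^2=0$, because $a_i^2=sa_i$. Setting $P=\prod_{i\in F}(s-a_i)$, commutativity gives $a_iP=0$ for every $i\in F$, since $P$ carries the factor $(s-a_i)$. Consequently $P\cdot wz=\sum_{i\in F}h_i(a_iP)=0$, that is $w(Pz)=0$; as $Ann(w)=0$ the element $w$ is regular, so I may cancel it to conclude $Pz=0$.

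To finish I would expand $P=\prod_{i\in F}(s-a_i)=s^{|F|}+\big(\text{terms each divisible by some }a_i\big)$, so that $s^{|F|}-P\in A_F\subseteq A$. Combining this with $Pz=0$ gives $s^{|F|}z=(s^{|F|}-P)z+Pz=(s^{|F|}-P)z\in A$, which is the desired conclusion with witness $s^{|F|}\in S$.

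The one genuine subtlety, and the step I expect to be the main obstacle, is the uniformity demanded by the definition: an $S-r-$ideal requires a single $s'\in S$ working simultaneously for all $w,z$, whereas the exponent $|F|$ produced above depends on the particular product $wz$. When $\Delta$ is finite this is harmless: with $N=|\Delta|$ one has $|F|\le N$ for every such $F$, and then $s^{N}z=s^{N-|F|}(s^{|F|}z)\in A$ because $A$ is an ideal, so $s^{N}$ is a uniform witness. (For infinite $\Delta$ the number of generators involved cannot be bounded, so the statement is naturally read with $\Delta$ finite, in keeping with the finite multiplicatively closed set $S=\{s_1,\dots,s_m\}$.) Thus I anticipate the product identity $a_i(s-a_i)=0$ to carry all the algebraic weight, with only the bookkeeping of the exponent requiring care.
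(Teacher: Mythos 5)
Your algebraic core coincides exactly with the paper's proof: write $wz=\sum_{i\in F}a_ih_i$ over a finite $F$, use $a_i(s-a_i)=sa_i-a_i^2=0$ to get $P\cdot wz=0$ for $P=\prod_{i\in F}(s-a_i)$, conclude $Pz=0$ from $Ann(w)=0$, and expand $P=s^{|F|}-t$ with $t\in A$ to obtain $s^{|F|}z\in A$. The gap is in how you restore uniformity at the end. You resolve it by assuming $\Delta$ is finite and taking $s^{|\Delta|}$ as the witness, declaring that the statement is ``naturally read with $\Delta$ finite.'' That restriction is not in the statement and is not needed: $\Delta$ is an arbitrary index set, and the proposition holds for infinite $\Delta$ as well, so your proof establishes only a special case.

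The idea you are missing is to exploit the finiteness of $S$, not of $\Delta$ --- ironically, the very hypothesis you cite as a reason to shrink $\Delta$ is the tool that makes any restriction on $\Delta$ unnecessary. Since $S$ is multiplicatively closed and $s\in S$, every power $s^{n}$ lies in $S=\{s_1,\dots,s_m\}$, hence $s^{n}=s_j$ for some $j$, and therefore $s=s_1\cdots s_m=s^{n}\cdot\prod_{i\neq j}s_i$ is a multiple of $s^{n}$. Consequently $s^{|F|}z\in A$ immediately yields $sz=\bigl(\prod_{i\neq j}s_i\bigr)\,s^{|F|}z\in A$, so the single fixed element $s$ witnesses the $S$-$r$-property for every pair $w,z$, regardless of how many generators appear in the decomposition of $wz$. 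This is precisely how the paper closes its proof (the terse step from $s^nz=tz\in A$ to $sz\in A$), and it is the same device used in its Proposition on $S$-$uz$-rings, where the product of all elements of a finite m.c.s. absorbs any one of them; it also explains why the statement bothers to define $s$ as the product of \emph{all} elements of $S$. With this replacement your argument proves the statement in full generality.
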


\begin{proof}
Take $wz\in A$ and $Ann(w)=0$ for some $w,z \in H$. We aim to show that $sz\in A.$ Write $wz=\sum^{n}_{k=1}a_{i_k}r_{i_k}$ for some $a_{i_1},\cdots, a_{i_n} \in T$ and $r_{i_1},\cdots, r_{i_n} \in H$. Set $y=\prod_{k=1}^n(s-a_{i_k})$. Hence, $wzy=0,$ and so $zy=0.$ Observe that $y$ can be written as $y=s^n-t$ for some $t\in A$. Thus, $z(s^n-t)=0$ and $s^n z=tz\in A$. As a result, $sz\in A,$ as desired.
    
\end{proof}

Recall from \cite{M} that a ring $H$ is said to be a $uz-$ring if each element if $H$ is either a zero divisor or a unit. for instance, $S^{-1}H$ is a $uz-$ring, where $S=reg(R)$, and Artinian rings are $uz-$rings \cite{M}.

\begin{definition}
Let $S\subseteq H$ be a m.c.s.. An element $a$ of $H$ is said to be $S-$unit if $Ha\cap S\neq \varnothing.$
\end{definition}

\begin{definition}
    A ring $H$ is said to be $S-uz-$ring if each element of $H$ is either a zero divisor or $S-unit$. 
\end{definition}

\begin{proposition}\label{S-uz}
Let $S=\{s_1,\cdots,s_n \}$ be a m.c.s. of $H$ and $s=s_1\cdot s_2\cdots s_n \in S.$ Then every ideal $A$ of $H$ with $A\cap S= \varnothing$ is an $S-r-$ideal if and only if $H$ is an $S-uz-$ring.
\end{proposition}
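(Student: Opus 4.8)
The plan is to prove both implications directly from the definitions, using the principal ideal $Hw$ on one side and the product $s=s_1\cdots s_n$ as a uniform witness on the other; the finiteness of $S$ (which guarantees $s\in S$) is exactly what makes the latter construction succeed.

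For the forward direction I would assume every ideal disjoint from $S$ is an $S-r-$ideal and take an element $w\in H$ that is not a zero divisor, i.e. $Ann(w)=0$, with the goal of showing $w$ is an $S-$unit. First I would dispose of the trivial case $Hw\cap S\neq\varnothing$, in which $w$ is already an $S-$unit by definition. Otherwise $Hw\cap S=\varnothing$, so the principal ideal $A=Hw$ is disjoint from $S$ and hence an $S-r-$ideal by hypothesis. Applying the $S-r-$ideal property to the relation $w\cdot 1=w\in Hw$ (with $Ann(w)=0$) produces some $s\in S$ with $s=s\cdot 1\in Hw$, that is, $s\in Hw\cap S$, contradicting disjointness. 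Hence $Hw\cap S\neq\varnothing$ and $w$ is an $S-$unit, so $H$ is an $S-uz-$ring.

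For the converse I would assume $H$ is an $S-uz-$ring, fix $s=s_1\cdots s_n\in S$ (which lies in $S$ because $S$ is finite and multiplicatively closed), and let $A$ be any ideal with $A\cap S=\varnothing$. Given $w,z\in H$ with $wz\in A$ and $Ann(w)=0$, the element $w$ is regular, hence not a zero divisor, so by the $S-uz$ hypothesis it must be an $S-$unit: there is $r\in H$ with $rw=s_j\in S$ for some index $j$. Multiplying $wz\in A$ by $r$ gives $s_jz=(rw)z=r(wz)\in A$, and since $s=\bigl(\prod_{i\neq j}s_i\bigr)s_j$, multiplying once more by the complementary product yields $sz\in A$. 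As the single element $s$ works for every such pair $(w,z)$ and every ideal $A$ disjoint from $S$, each such $A$ is an $S-r-$ideal.

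The main obstacle is exhibiting one witness $s$ that serves simultaneously for all regular $w$ and all admissible $A$ in the converse: the $S-$unit condition only guarantees, for each regular $w$, some product $rw$ landing in $S$, and a priori this element of $S$ varies with $w$. Finiteness of $S$ resolves this, since the product $s$ of all members of $S$ is itself a member and is divisible by each $s_j$, so multiplying through by the complementary factor absorbs the $w-$dependence into the fixed $s$. Without finiteness this uniformization step would break down, which is precisely why the hypothesis is imposed.
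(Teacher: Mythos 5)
Your proof is correct and follows essentially the same route as the paper's: the forward direction applies the $S-r-$ideal hypothesis to the principal ideal $Hw$ of a regular non-$S$-unit element to reach a contradiction, and the converse uses the product $s=s_1\cdots s_n$ as the uniform witness, absorbing the $w$-dependent element $s_j$ via the complementary factor. Your version is, if anything, slightly more careful in making the case split $Hw\cap S\neq\varnothing$ versus $Hw\cap S=\varnothing$ explicit before invoking the hypothesis.
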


\begin{proof}

$(\Leftarrow):$ Let $A$ be an ideal with $A\cap S= \varnothing$ and $ab\in A$ with $Ann(a)=0$. Then $a$ is an $S-unit$, that is there exists $v\in H$ such that $av=s_r$ for some $s_r\in S$. Hence $sb=(s_1\cdots s_r\cdots s_n)b=(s_1\cdots s_{r-1} s_{r+1}\cdots s_n) (s_r b)=(s_1\cdots s_{r-1} s_{r+1}\cdots s_n) (va b)\in A$.\\

$(\Rightarrow):$ Let $a\in H$ and $a$ is a zero divisor, then we are done. Assume that $a$ is regular and $a$ is not an $S$-unit. Let $A=Ha$ with $A\cap S=\varnothing.$ As $A$ is an $S-r-$ideal and $a\cdot 1\in A$ and $Ann(a)=0$, then $s\cdot1\in A$ for some $s\in S$, a contradiction. Thus $a$ is either an $S-unit$ or a zero divisor.
\end{proof}

More generally, we have the following result.

\begin{proposition}\label{S-uz-prime}

   Let $S\subseteq H$ be a finite m.c.s. and $S\cap M=\varnothing$ for every maximal ideal $M$ of $H$. Then the following statements are equivalent.\\
    a) $H$ is an $S-uz-$ring.\\
    b) Every prime ideal $A$ of $H$ with $A\cap S=\varnothing$ is an $S-r-$ideal.\\
    c) Every maximal ideal of $H$ is an $S-r-$ideal.
\end{proposition}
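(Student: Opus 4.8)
The plan is to prove the three statements equivalent by the cycle $(a)\Rightarrow(b)\Rightarrow(c)\Rightarrow(a)$, writing $s=s_1 s_2\cdots s_n$ for the product of the finitely many elements of $S$ as in Proposition \ref{S-uz}.

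First I would dispose of $(a)\Rightarrow(b)$ by quoting Proposition \ref{S-uz}: once $H$ is known to be an $S-uz-$ring, that result guarantees that \emph{every} ideal of $H$ disjoint from $S$ is an $S-r-$ideal. A prime ideal $A$ with $A\cap S=\varnothing$ is such an ideal, so it is an $S-r-$ideal, which is exactly $(b)$. This is the only place the finiteness of $S$ enters the argument, through the product $s$.

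The implication $(b)\Rightarrow(c)$ is immediate: any maximal ideal $M$ of $H$ is prime, and by the standing hypothesis $S\cap M=\varnothing$, so $M$ is a prime ideal disjoint from $S$ and $(b)$ applies verbatim to show $M$ is an $S-r-$ideal.

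The substantive direction is $(c)\Rightarrow(a)$. Assuming every maximal ideal is an $S-r-$ideal, I would take an arbitrary $a\in H$; if $Ann(a)\neq 0$ then $a$ is a zero divisor and we are done, so suppose $a$ is regular and aim to show it is an $S-$unit. Arguing by contradiction, suppose $Ha\cap S=\varnothing$. Since $1\in S$ this already gives $1\notin Ha$, so $Ha$ is proper and hence sits inside some maximal ideal $M$. By $(c)$, $M$ is an $S-r-$ideal with some witness $t\in S$; applying the defining property to $w=a$, $z=1$, using $a=a\cdot 1\in Ha\subseteq M$ and $Ann(a)=0$, forces $t=t\cdot 1\in M$. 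This contradicts $S\cap M=\varnothing$, so $a$ must be an $S-$unit and $H$ is an $S-uz-$ring. The one point to watch is that the hypothesis $S\cap M=\varnothing$ for every maximal ideal is precisely what converts the membership $t\in M$ into a contradiction; it is the hinge of the whole equivalence, and notably this last direction needs no finiteness assumption on $S$.
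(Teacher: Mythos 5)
Your proposal is correct and follows essentially the same route as the paper: the cycle $(a)\Rightarrow(b)\Rightarrow(c)\Rightarrow(a)$, with $(a)\Rightarrow(b)$ obtained by citing Proposition \ref{S-uz}, $(b)\Rightarrow(c)$ immediate from the standing hypothesis $S\cap M=\varnothing$, and $(c)\Rightarrow(a)$ by placing a regular non-$S$-unit $a$ inside a maximal ideal. The only cosmetic difference is that in $(c)\Rightarrow(a)$ you apply the $S-r-$ideal definition directly with $w=a$, $z=1$ to force $t\in M\cap S$, whereas the paper invokes Theorem \ref{theorem1} to conclude $M\subseteq zd(H)$; these are the same argument, since that inline computation is exactly how the relevant direction of Theorem \ref{theorem1} is proved.
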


\begin{proof}

$(a) \Rightarrow (b):$ Follow by Proposition \ref{S-uz}.\\
$(b) \Rightarrow (c):$ It is clear.\\
$(c) \Rightarrow (a):$ Let $a\in H$ be none $S-unit$ in $H$. Then $av\neq s,\; \forall v\in H$ and $\forall s\in S$. In particular, $av\neq 1$ for all $v\in H.$ Thus $a\in M$ for some maximal ideal $M$ of $H$. By $(c)$, $ M$ is an $S-r-$ideal, and by Theorem \ref{theorem1}, we obtain $M\subseteq zd(H)$, that is, $a\in zd(H)$. Namely, if $a$ is not an $S-unit$, then it is a zero divisor. Thus $H$ is an $S-uz-$ ring.
\end{proof}

\begin{example}
Let $F$ be a finite field and $M=<x-1>\subseteq F[x]$ is a maximal ideal. Then by [\cite{Kaplan}, Theorem 150] we can find a regular element $f\in M.$ Let $S=F\setminus \{0\}$.
 Note that $S\cap M= \varnothing$. Since $f\cdot 1\in M$ and $Ann(f)=0$, but $\forall s\in S,\; s\in S,\; s\cdot 1 \notin M.$ Then $M$ is not $S-r-$ideal of $F[x]$. Therefore , by Proposition \ref{S-uz-prime}, $F(x)$ can not be $S-uz-$ring.
\end{example}

\section{$S-r-$ideals in Some Special Rings}\label{sec3}

\begin{lemma}\label{isomorphism}
    If $f:H_1 \rightarrow H_2$ is an isomorphism, then $f(Ann(w))=Ann(f(w))$ for $w\in H_1$.
\end{lemma}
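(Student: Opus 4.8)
The plan is to establish the set equality $f(Ann(w)) = Ann(f(w))$ by proving the two inclusions separately, using the fact that a ring isomorphism is a bijective ring homomorphism. Throughout I would use only that $f$ preserves multiplication, sends $0$ to $0$, is injective, and is surjective.

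First I would prove $f(Ann(w)) \subseteq Ann(f(w))$, which needs only that $f$ is a homomorphism. Take any element of $f(Ann(w))$; it has the form $f(y)$ with $y \in Ann(w)$, so $yw = 0$. Applying $f$ gives $f(y)f(w) = f(yw) = f(0) = 0$, which says precisely that $f(y) \in Ann(f(w))$. This direction is immediate and uses no bijectivity.

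Next I would prove the reverse inclusion $Ann(f(w)) \subseteq f(Ann(w))$, and here both surjectivity and injectivity enter. Take $z \in Ann(f(w))$, so $z\, f(w) = 0$. By surjectivity of $f$ there is some $y \in H_1$ with $f(y) = z$. Then $f(yw) = f(y)f(w) = z\, f(w) = 0 = f(0)$, and by injectivity of $f$ we conclude $yw = 0$, i.e. $y \in Ann(w)$. Hence $z = f(y) \in f(Ann(w))$, as required.

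The only point requiring any care, and the step I would flag as the crux, is the second inclusion: one must invoke surjectivity to produce a preimage $y$ of $z$ and then injectivity to pass from $f(yw) = 0$ back to $yw = 0$. If $f$ were merely a homomorphism the first inclusion would still hold, but this reverse inclusion can fail, so the full strength of ``isomorphism'' is genuinely used here. Combining the two inclusions yields $f(Ann(w)) = Ann(f(w))$.
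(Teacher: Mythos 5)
Your proof is correct and follows essentially the same argument as the paper: both establish the inclusion $f(\mathrm{Ann}(w)) \subseteq \mathrm{Ann}(f(w))$ using only the homomorphism property, and both prove the reverse inclusion by using surjectivity to produce a preimage and injectivity to pass from $f(xw)=0$ back to $xw=0$. Your additional remark pinpointing exactly where bijectivity is needed is a nice touch, but the substance matches the paper's proof.
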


\begin{proof}
Let $y\in f(Ann(w))$. Then $\exists x\in Ann(w)$ such that $y=f(x)\in f(Ann(w)).$ Then $xw=0$ and so $f(xw)=f(x)f(w)=0$, which implies $y=f(x)\in Ann(f(w))$. Conversely, let $y\in Ann(f(w)).$ As $f$ is onto, $\exists x\in H_1$ such that $y=f(x)$. So, $f(x)\in Ann(f(w)).$ This means $f(x)f(w)=f(xw)=0.$ Since $f$ is $one-to-one$, we obtain $xw=0,$ that is, $x\in Ann(w)$. Thus, $y=f(x)\in f(Ann(w))$.
\end{proof}





\begin{proposition}
 
1)  Suppose that $f$ is an isomorphism. If $A \bowtie^f J$ is an $S\bowtie^f J-r-ideal$ of $H_1\bowtie^f J$, then $A$ is an $S-r-ideal$ of $H_1$.\\
2) Assume that $f$ is an epimorphism, $H_1$ is an integral domain and $J\subseteq zd(H_2)$. If $A$ is an $S-r-ideal$ of $H_1$, then $A \bowtie^f J$ is an $S\bowtie^f J-r-ideal$ of $H_1 \bowtie^f J$ 
\end{proposition}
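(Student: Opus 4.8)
The plan is to work throughout with the explicit descriptions of products, annihilators, and ideal membership in the amalgamation. Recall that $(x,y)\in H_1\times H_2$ lies in $A\bowtie^f J$ exactly when $x\in A$ and $y-f(x)\in J$, while $(z',f(z')+k)$ annihilates $(w,f(w)+j)$ precisely when $wz'=0$ and $(f(w)+j)(f(z')+k)=0$. The key observation I would record first is a membership shortcut: for any witness $(s,f(s)+j_0)\in S\bowtie^f J$ and any $(z,f(z)+k)\in H_1\bowtie^f J$, the product has first component $sz$ and its second component differs from $f(sz)$ only by a sum of terms each lying in $J$; hence the product lies in $A\bowtie^f J$ if and only if $sz\in A$. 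This reduces both implications essentially to tracking first components, once the annihilator hypotheses have been transferred between $H_1$ and $H_1\bowtie^f J$.

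For part (1), I would lift a given pair $w,z\in H_1$ with $wz\in A$ and $Ann(w)=0$ to the elements $(w,f(w))$ and $(z,f(z))$. Their product $(wz,f(wz))$ lies in $A\bowtie^f J$ since $wz\in A$. The crux is to verify $Ann((w,f(w)))=0$: if $(z',f(z')+k)$ annihilates it, then $wz'=0$ forces $z'=0$ (as $w$ is regular and $f$, being an isomorphism, sends $Ann(w)=0$ to $Ann(f(w))=0$ by Lemma \ref{isomorphism}, so $f(w)$ is regular in $H_2$), and then $f(w)k=0$ with $f(w)$ regular forces $k=0$. Applying the hypothesis with its witness $(s,f(s)+j_0)$ yields $(s,f(s)+j_0)(z,f(z))\in A\bowtie^f J$, whose first component $sz$ then lies in $A$ by the shortcut. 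Disjointness $A\cap S=\varnothing$ follows by lifting any common element $a$ to $(a,f(a))\in(A\bowtie^f J)\cap(S\bowtie^f J)$.

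For part (2), I would take the witness $(s,f(s))$, where $s$ is the $S$-$r$-witness for $A$, and start from $(w,f(w)+j)(z,f(z)+k)\in A\bowtie^f J$ with the first factor having zero annihilator. By the membership description the hypothesis on the product simplifies to $wz\in A$. The main work is again the annihilator translation: I must show $Ann((w,f(w)+j))=0$ forces $Ann(w)=0$, i.e. $w\neq 0$ since $H_1$ is a domain. Arguing by contradiction, if $w=0$ then the element is $(0,j)$; when $j=0$ its annihilator is the whole (nonzero) ring, and when $j\neq 0$ the hypothesis $J\subseteq zd(H_2)$ gives a nonzero $h\in H_2$ with $jh=0$, and surjectivity of $f$ writes $h=f(u)$, so that $(u,f(u))\neq 0$ annihilates $(0,j)$ — in either case contradicting $Ann((w,f(w)+j))=0$. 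Hence $w\neq 0$, so $Ann(w)=0$, and the $S$-$r$-ideal property of $A$ gives $sz\in A$; by the shortcut $(s,f(s))(z,f(z)+k)\in A\bowtie^f J$, as required. Disjointness of $A\bowtie^f J$ and $S\bowtie^f J$ is immediate, since a common element would have first component in $A\cap S=\varnothing$.

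I expect the genuinely delicate step to be the annihilator bookkeeping inside the amalgamation, and in particular the construction in part (2) of a nonzero annihilator of $(0,j)$: this is precisely where both the zero-divisor hypothesis $J\subseteq zd(H_2)$ and the surjectivity of $f$ are \emph{indispensable}, and dropping either one breaks the argument. The analogous subtlety in part (1) is the upward transfer $Ann(w)=0\Rightarrow Ann((w,f(w)))=0$, which is exactly why the stronger assumption that $f$ is an isomorphism (rather than merely an epimorphism) is invoked there.
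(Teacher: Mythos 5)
Your proof is correct and follows essentially the same route as the paper's: in part (1) you lift $w,z$ to $(w,f(w)),(z,f(z))$ and use Lemma \ref{isomorphism} to transfer regularity of $w$ to regularity of $(w,f(w))$, and in part (2) you build a nonzero annihilator inside the amalgamation from $J\subseteq zd(H_2)$ together with surjectivity of $f$, invoking the integral-domain hypothesis, exactly as the paper does. The only cosmetic difference is in part (2), where the paper shows $Ann(w)=0$ directly by sending an arbitrary $x\in Ann(w)$ to the element $(x_1x,f(x_1x))$ annihilating $(w,f(w)+j_1)$, whereas you reduce to showing $w\neq 0$ and contradict $w=0$; in a domain these are equivalent pieces of bookkeeping.
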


\begin{proof}

1) Observe that $S\cap A=\varnothing.$ Let $wz\in A$ with $Ann(w)=0$, for some $w,z\in H_1.$ Then $(w,f(w))(z,f(z))=(wz,f(wz)) \in A \bowtie^f J$. We show that $Ann(w,f(w))=(0,0)$. Let $(x,f(x)+j)\in Ann(w,f(w))$. It gives $(x,f(x)+j)(w,f(w))=(xw,f(xw)+jf(w))=(0,0)$. We have $xw=0$ and $f(xw)+jf(w)=jf(w)=0.$ As $x\in Ann(w)=0$, we obtain $x=0$. Since $f$ is an isomorphism $jf(w)=0,$ we have by Lemma \ref{isomorphism}, $j\in Ann(f(w))=f(Ann(w))=f(0)=0,$ that is, $j=0$ and so $(x,f(x)+j)=(0,0)$. Thus, by hypothesis, $\exists (s,f(s)+j)\in S \bowtie^f J$ such that $(s,f(s)+j)(z,f(z))=(sz,f(sz)+jf(z))\in A \bowtie^f J.$ Hence, $sz\in A.$\\

 2) Note that $S\bowtie^f J \cap A\bowtie^fJ=\varnothing.$ Let $(w,f(w)+j_1)(z,f(z)+j_2)\in A \bowtie^f J$ for some $(w,f(w)+j_1)$ and $(z,f(z)+j_2)\in H_1 \bowtie^f J$, with $Ann((w,f(w)+j_1))=(0,0)$. We show that $Ann(w)=0.$ Since $J\subseteq zd(H_2)$ and $f$ is an epimorphism, there exist $0\neq z_1\in H_2$ and $x_1\in H_1$ such that $j_1z_1=0$ and $f(x_1)=z_1.$ Let $x\in Ann(w)$. We have $(x_1x,f(x_1x))(w,f(w)+j_1)=(x_1xw,f(x_1xw)+j_1f(x_1x))=(0,j_1z_1f(x))=(0,0)$. Hence $(x_1x, f(x_1x))\in Ann((w,f(w)+j_1))=(0,0)$ and $x_1x=0$. As $H_1$ is an integral domain and $x_1$ can not be zero (otherwise, $z_1=0$) we obtain $x=0$ and thus, $Ann(w)=0$. Since $wz\in A$ and $A$ is an $S-r-$ideal, there exists $s\in S$ such that $sz\in A.$ Fix $(s,f(s))\in S \bowtie^f J.$ We have $(s,f(s))(z,f(z)+j_2)=(sz, f(sz)+j_2f(s))\in A \bowtie^f J.$
\end{proof}


Let $H$ be a ring and $M$ be an $H-$module. Recall from \cite{Ander} that the set $$H\propto M=\{(r,m):r\in H,\; m\in M \}$$ 
is a ring defined by the following operation:\\
$(r_1,m_1)\cdot (r_2,m_2)=(r_1r_2, r_1m_2+r_2m_1)$. The multiplicative identity is $(1,0)$ and the zero of ring is $(0,0).$ We know from \cite{Ander} that $A\propto N $ is an ideal of $H\propto M$ if and only if $AM\subseteq N$, where $A$ is an ideal of $H$ and $N$ is a submodule of $M.$\\

The following result is proved in \cite{MH} under the condition $Z(M)=zd(H)$, where $Z(M)=\{r\in H: rx=0,\;for \;some\;0\neq x\in M\}$. We prove the result under the conditions where $M$ is torsion-free and $\bigcup_{a\in H}Ann(a)\subseteq Ann(M)$.

\begin{proposition}
  Suppose that $A\cap S=\varnothing.$  If $M$ is a torsion-free module and $\bigcup_{a\in H}Ann(a)\subseteq Ann(M)$, then the following statements are equivalent:\\
    1) $A$ is an $S-r-ideal$.\\
    2) $A \propto M$ is an $(S\propto 0)-r-ideal$ of $H\propto M$.\\
    3) $A \propto M$ is an $(S\propto M)-r-ideal$ of $H\propto M$.
\end{proposition}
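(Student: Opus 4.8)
The plan is to route the whole equivalence through a single annihilator identity in the idealization and then close a short cycle of implications. The decisive step is the claim that, under the two standing hypotheses, for every $w\in H$ and every $m\in M$ one has
\[
Ann\big((w,m)\big)=(0,0)\ \text{in}\ H\propto M \iff Ann(w)=0\ \text{in}\ H .
\]
I would establish this first, since each hypothesis feeds exactly one direction. For the implication ``$\Leftarrow$'', assume $Ann(w)=0$ and take $(x,n)\in Ann((w,m))$, i.e. $(x,n)(w,m)=(xw,\,xm+wn)=(0,0)$; the first coordinate gives $x\in Ann(w)=0$, so $x=0$, and then $wn=0$. Since $Ann(w)=0$ means $w\in reg(H)$ and $M$ is torsion-free, this forces $n=0$, so $Ann((w,m))=(0,0)$. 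For ``$\Rightarrow$'', assume $Ann((w,m))=(0,0)$ and take $x\in Ann(w)$; then $xm=0$ because $x\in Ann(w)\subseteq \bigcup_{a\in H}Ann(a)\subseteq Ann(M)$, whence $(x,0)(w,m)=(xw,\,xm)=(0,0)$ and thus $x=0$. So this direction is precisely where the hypothesis $\bigcup_{a\in H}Ann(a)\subseteq Ann(M)$ does its work, while torsion-freeness did the work above. The degenerate situations $M=0$ (where $H\propto M\cong H$ and $S\propto 0=S\propto M$) or $w=0$ (which forces $M=0$ once $Ann((w,m))=(0,0)$) collapse the statement and would be dispatched separately.

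Next I would record the routine bookkeeping: $S\propto 0\subseteq S\propto M$ are both m.c.s.\ of $H\propto M$, the set $A\propto M$ is an ideal since $AM\subseteq M$, and because $A\cap S=\varnothing$ both $(A\propto M)\cap(S\propto 0)$ and $(A\propto M)\cap(S\propto M)$ are empty. Hence $A\propto M$ is eligible to be an $r$-ideal relative to either multiplicatively closed set, and no disjointness issue obstructs the arguments below.

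With the lemma in hand I would prove the cycle $(1)\Rightarrow(2)\Rightarrow(3)\Rightarrow(1)$. For $(1)\Rightarrow(2)$, let $s\in S$ witness that $A$ is an $S-r-$ideal and take $(s,0)$ as candidate witness; given $(w,m_1)(z,m_2)\in A\propto M$ with $Ann((w,m_1))=(0,0)$, the first coordinate yields $wz\in A$ and the lemma (direction ``$\Rightarrow$'') yields $Ann(w)=0$, so $sz\in A$ and therefore $(s,0)(z,m_2)=(sz,\,sm_2)\in A\propto M$. The implication $(2)\Rightarrow(3)$ is then immediate from Proposition \ref{Prop.23} (using the same witness), since $S\propto 0\subseteq S\propto M$ and disjointness holds. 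For $(3)\Rightarrow(1)$, let $(s,n)\in S\propto M$ be the witness; given $wz\in A$ with $Ann(w)=0$, apply the lemma (direction ``$\Leftarrow$'') to get $Ann((w,0))=(0,0)$, and since $(w,0)(z,0)=(wz,0)\in A\propto M$, statement $(3)$ gives $(s,n)(z,0)=(sz,\,zn)\in A\propto M$, i.e.\ $sz\in A$, so $A$ is an $S-r-$ideal.

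The main obstacle I anticipate is entirely the annihilator lemma: one must see that the two hypotheses are not decorative but are each exactly calibrated to one half of the biconditional, namely that $\bigcup_{a}Ann(a)\subseteq Ann(M)$ forces every relevant element of $Ann(w)$ to kill $M$ (controlling the forward direction), whereas torsion-freeness controls the second-coordinate equation $wn=0$ (the backward direction). Once that identity is secured, the three implications are short coordinate computations, and the care needed lies only in the edge cases $M=0$ and $w=0$ noted above.
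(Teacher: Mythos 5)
Your proposal is correct and follows essentially the same route as the paper: the cycle $(1)\Rightarrow(2)\Rightarrow(3)\Rightarrow(1)$, with the union hypothesis $\bigcup_{a\in H}Ann(a)\subseteq Ann(M)$ used to pass from $Ann((w,m_1))=(0,0)$ to $Ann(w)=0$ in $(1)\Rightarrow(2)$, Proposition \ref{Prop.23} for $(2)\Rightarrow(3)$, and torsion-freeness to get $Ann((w,0))=(0,0)$ in $(3)\Rightarrow(1)$. The only difference is presentational: you package the two annihilator transfers into a single stand-alone biconditional lemma, whereas the paper proves exactly the same two coordinate computations inline within the respective implications.
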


\begin{proof}
    We can see that $S\propto M=\{(r,m): r\in S,\; m\in M\}$ is a m.c.s. of $H\propto M$. Because $0\notin S$, $(0,0) \notin S\propto M$, and since $1\in S$, we have $(1,0)\in S\propto  M$. Also, for all $(s_1,m_1),(s_2,m_2)\in S\propto M$ we have $(s_1,m_1)\cdot (s_2,m_2)= (s_1s_2,s_1m_2+s_2m_1) \in S\propto M$. Moreover, as $S\cap A=\varnothing$, we have $(S\propto 0) \cap (A \propto M)= \varnothing$. Now we can start the proof:\\
    $(1)\Rightarrow (2):$ Let $(w,m_1)(z,m_2)\in A\propto M$ for some $(w,m_1),(z,m_2)\in H\propto M$ with $Ann((w,m_1)=(0,0).$ Let $x\in Ann(w)$. By assumption, $xM=0$, in particular, we have $xm_1=0$. Note that $(x,0)(w,m_1)=(xw,xm_1)=(0,0)$ and hence $(x,0)\in Ann(w,m_1)=(0,0)$, that is, $x=0$ and so $Ann(w)=0$. As $wz\in A$ and $A$ is an $S-r-$ideal, there exists $s\in S$ such that $sz\in A$. Thus, $(s,0)\in S\propto 0$ and $(s,0)(z,m_2)=(sz,sm_2)\in A\propto M$.\\
    $(2)\Rightarrow (3):$ Since $S\propto  0\subseteq S\propto M$ and $(S\propto M) \cap (A\propto M)=\varnothing$, the result follows by Proposition \ref{Prop.23}.\\
    $(3) \Rightarrow (1):$ Let $wz\in A$ for some $w,z\in H$ with $Ann(w)=0$. Then $(w,0)(z,0)\in A\propto M$. Let $(r,x)\in Ann(w,0)$. We get $(r,x)(w,0)=(rw,xw)=(0,0)$, that is, $rw=0$ and $wx=0$. Since $Ann(w)=0$ and $M$ is torsion-free and $w\neq 0$, we have $r=0$ and $x=0$ and hence $Ann(w,0)=(0,0).$ By (3), there exists $(s,m)\in S\propto M$ such that $(s,m)(z,0)=(sb,bm) \in A\propto M.$ Thus, $sz\in A.$
\end{proof}

\section{$S-r-$Ideals in Polynomial Rings}\label{sec4}

Suppose that $f=\sum_{i=0}^n a_ix^i \in H[x]$, where $a_i'$s are the coefficients. Let $C(f)$ denote the set of all coefficients of $f$ and $c(f)$ denote the ideal of $H$ that is generated by $C(f)$. If $A$ is an ideal of $H$, then $A[x]=\{f\in H[x]:\; C(f)\subseteq A\}$ is an ideal of $H[x]$ \cite{M}. 
 Recall that if every finitely generated ideal $A\subseteq zd(H)$ has non zero annihilator, then $H$ is said to satisfy the Property A and if for every finitely generated ideal $A$ of $H$ there exists an element $z\in H$ with $Ann(A)=Ann(z)$, then $H$ is said to satisfy the annihilator condition (a.c.) \cite{Huck}.


\begin{theorem} \label{Poly.Ring.1}
    Suppose that $H$ is a ring, $A$ is an ideal of $H$ and $S\subseteq reg(H)$ is a m.c.s. of $H$. The following are equivalent:\\
    1) The ring $H$ satisfies property A.\\
    2) For every ideal $A$ of $H$, $A$ is an $S-r-ideal$ in the ring $H$ if and only if $A[x]$ is an $S-r-ideal$ in the ring $H[x]$.
\end{theorem}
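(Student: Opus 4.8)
The plan is to prove the two implications separately, using two standard facts about content ideals as the bridge between $H$ and $H[x]$. The first is McCoy's theorem: a polynomial $f=\sum a_i x^i\in H[x]$ is a zero divisor if and only if there is a nonzero $c\in H$ with $cf=0$; since $cf=0$ is equivalent to $c\in Ann(c(f))$, this converts the regularity hypothesis $Ann(f)=0$ in $H[x]$ into the purely scalar statement $Ann(c(f))=0$. The second is the Dedekind--Mertens content formula $c(f)^{m+1}c(g)=c(f)^m c(fg)$ with $m=\deg g$, which will let me push membership in $A[x]$ from the product $fg$ down to the individual coefficients of $g$.

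For $(1)\Rightarrow(2)$ I fix an ideal $A$ and prove the biconditional. The implication ``$A[x]$ is an $S-r-$ideal $\Rightarrow A$ is an $S-r-$ideal'' needs no hypothesis on $H$: given $ab\in A$ with $Ann(a)=0$, I regard $a,b$ as constants, note $Ann_{H[x]}(a)=(Ann(a))[x]=0$, and apply the $S-r-$ideal property of $A[x]$ with a witness $s'\in S$ to get $s'b\in A[x]\cap H=A$; the required disjointness $A\cap S=\varnothing$ follows from $A[x]\cap S=\varnothing$. For the converse, let $s\in S$ witness that $A$ is an $S-r-$ideal, and suppose $fg\in A[x]$ with $Ann(f)=0$ in $H[x]$. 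By McCoy, $Ann(c(f))=0$, so $c(f)$ is a finitely generated ideal with zero annihilator; here Property A forces $c(f)\nsubseteq zd(H)$, i.e.\ $c(f)$ contains a regular element $r$. Since $fg\in A[x]$ gives $c(fg)\subseteq A$, the content formula yields $c(f)^{m+1}c(g)=c(f)^m c(fg)\subseteq A$; as $r^{m+1}\in c(f)^{m+1}$ is regular, each coefficient $g_j$ of $g$ satisfies $r^{m+1}g_j\in A$, and applying the $S-r-$ideal property of $A$ with $w=r^{m+1}$ gives $sg_j\in A$ for all $j$, hence $sg\in A[x]$ with the same witness $s$.

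For $(2)\Rightarrow(1)$ I argue by contraposition. If $H$ fails Property A, choose a finitely generated $B=(a_0,\dots,a_n)\subseteq zd(H)$ with $Ann(B)=0$ and set $f=\sum a_i x^i$, so that $c(f)=B$ and, by McCoy, $f\in reg(H[x])$. Exactly as in Remark \ref{Remark1} (Atiyah--Macdonald, Ch.~1, Ex.~14), enclose $B$ in a prime $P$ with $B\subseteq P\subseteq zd(H)$; since $S\subseteq reg(H)$ we automatically have $P\cap S=\varnothing$, so Theorem \ref{theorem1} shows $P$ is an $S-r-$ideal of $H$. On the other hand $P[x]$ is prime in $H[x]$ with $P[x]\cap S=\varnothing$, yet $f\in P[x]$ is regular in $H[x]$, so $P[x]\nsubseteq zd(H[x])$; Theorem \ref{theorem1} applied inside $H[x]$ then says $P[x]$ is not an $S-r-$ideal. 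This contradicts $(2)$ for the ideal $A=P$, which finishes the contrapositive.

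The routine parts are the constant-polynomial bookkeeping and the identity $Ann_{H[x]}(a)=(Ann(a))[x]$. The main obstacle is the hard half of $(1)\Rightarrow(2)$: one must recognize that Property A is precisely what upgrades the annihilator condition $Ann(c(f))=0$ (supplied by McCoy) into the existence of an honest regular element inside $c(f)$, and then control the coefficients of $g$ through the Dedekind--Mertens formula rather than through the inclusion $c(f)c(g)\subseteq c(fg)$, which fails in general. The ease of $(2)\Rightarrow(1)$ rests entirely on the prime-ideal characterization of Theorem \ref{theorem1}, which allows the single polynomial $f$ to simultaneously certify that $P$ is an $S-r-$ideal while $P[x]$ is not.
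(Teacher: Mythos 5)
Your proof is correct and follows essentially the same route as the paper's: the hard half of $(1)\Rightarrow(2)$ uses the McCoy/Property A bridge (which the paper packages as a citation to Aliabad--Mohamadian, Proposition 3.5) together with the Dedekind--Mertens formula $c(f)^{m+1}c(g)=c(f)^m c(fg)$ and the uniform witness $s$, while $(2)\Rightarrow(1)$ sandwiches the content of a regular polynomial between a prime and $zd(H)$ exactly as in Remark \ref{Remark1} and then contradicts Theorem \ref{theorem1}. The only differences are cosmetic: you unpack the cited results (McCoy's theorem, the single-witness property of $S-r-$ideals) inline rather than referencing them.
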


\begin{proof}
    $(1) \Rightarrow (2):$ Note that $S\cap A[x]=\varnothing$ if and only if $S\cap A=\varnothing.$ Assume that $A$ is an $S-r-ideal$ in $H,\; w,z \in H[x]$ and $zw\in A[x]$ with $Ann_{H[x]}(z)=0$. Then, by [\cite{AM}, Proposition 3.5], we obtain that $c(z)\nsubseteq zd(H)$. Thus, $\exists r\in c(z)$ such that $Ann_H(r)=0$. It is clear that $C(wz)\subseteq A$ and it gives $c(wz)\subseteq A.$ By \cite[Theorem 28.1]{Gilmer}, we get $c(z)^{m+1}c(w)=c(z)^mc(wz),$ where $m$ is the degree of $w.$ This gives $c(z)^{m+1}c(w)\subseteq A.$ As $r^{m+1}\in c(z)^{m+1}$, we have that $r^{m+1}c(w)\subseteq A.$ On the other hand, we have $Ann_H(r^{m+1})=0$. Now, we conclude that by Proposition [\cite{MH}, Theorem 2.1]  $\exists s \in S,\; sc(w)\subseteq A.$ Thus, $sw \in A[x].$
    Conversely let $s\in S$ satisfy the $S-r-ideal$ property for $A[x]$. Let $wz\in A$ and $Ann_H(w)=0$. Since $wzx^0\in A[x]$ and $Ann_{H[x]}(wx^0)=0,$ we get $sz \in A[x]$ and so $sz\in A.$\\
    $(2)\Rightarrow (1):$ Assume that Property A does not hold for $H$. By \cite[Proposition 3.5]{AM}, $\exists w\in H[x]$ with $Ann_{H[x]}(w)=0$ and $A=c(w)\subseteq zd(H)$. Then by Remark \ref{Remark1}, we can find an ideal $B$ with $B\cap S=\varnothing$ (since $S\subseteq reg(H)$) such that $B$ is a prime $S-r-$ideal and $A=c(w)\subseteq B$. Therefore, $w\in B[x],$ while $w$ is a regular element. Thus, $B[x]$ is not an $S-r-ideal$, a contradiction.   
\end{proof}

Let $H$ be a ring. $H$ is called to have the finite annihilator condition (briefly, $f.a.c.$) if for each finite subset  $A$ of $H$ there is $w\in A$ such that $Ann(A)=Ann(w)$, see \cite{M}.


\begin{theorem} \label{Poly.Ring.2}
    
    Suppose that $H$ is a ring that satisfies the $f.a.c.$ and let $S\subseteq H$ be a m.c.s. Then $A$ is an $S-r-ideal$ in the ring $H$ if and only if $A[x]$ is an $S-r-ideal$ in the ring $H[x]$.
\end{theorem}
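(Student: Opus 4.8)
The plan is to prove the two implications separately, and the crux is the forward direction, where the $f.a.c.$ hypothesis does the essential work; this is precisely what lets us drop the assumption $S\subseteq reg(H)$ that was needed in Theorem \ref{Poly.Ring.1}. Before either half, I would record the trivial bookkeeping: since $S\subseteq H$ sits inside $H[x]$ as the constant polynomials and $A[x]\cap H=A$, we have $S\cap A[x]=\varnothing$ if and only if $S\cap A=\varnothing$, so the disjointness requirement transfers freely in both directions.

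For the easy implication, suppose $A[x]$ is an $S$-$r$-ideal with witness $s\in S$, and take $w,z\in H$ with $wz\in A$ and $Ann_H(w)=0$. Viewing $w,z$ as constants gives $wz\in A[x]$; moreover a polynomial $\sum b_ix^i$ annihilates the constant $w$ only if every $b_i\in Ann_H(w)=0$, so $Ann_{H[x]}(w)=0$. The defining property of $A[x]$ then yields $sz\in A[x]$, and reading off the constant term gives $sz\in A$. Note this half uses neither $f.a.c.$ nor any condition on $S$.

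The forward direction is where the work lies. Let $s\in S$ witness that $A$ is an $S$-$r$-ideal, and suppose $fg\in A[x]$ with $Ann_{H[x]}(f)=0$ for $f,g\in H[x]$. By McCoy's theorem, $Ann_{H[x]}(f)=0$ is equivalent to $Ann_H(C(f))=0$, where $C(f)$ is the finite set of coefficients of $f$. Here I invoke $f.a.c.$: applied to the finite set $C(f)$ it produces a single coefficient $w\in C(f)$ with $Ann_H(w)=Ann_H(C(f))=0$, that is, a genuine regular coefficient of $f$ lying in $c(f)$. This is the step that replaces the Property A / $S\subseteq reg(H)$ machinery of Theorem \ref{Poly.Ring.1}, and I expect it to be the main obstacle, since everything hinges on converting the polynomial-level regularity of $f$ into an honest regular element of $H$ inside $c(f)$.

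Once that regular $w$ is in hand, I would apply the Dedekind--Mertens content formula \cite[Theorem 28.1]{Gilmer}, namely $c(f)^{m+1}c(g)=c(f)^m c(fg)$ with $m=\deg g$. Since $fg\in A[x]$ gives $c(fg)\subseteq A$ and $A$ is an ideal, this forces $c(f)^{m+1}c(g)\subseteq c(f)^m A\subseteq A$, and in particular $w^{m+1}c(g)\subseteq A$. As $w$ is regular, so is $w^{m+1}$, i.e.\ $Ann_H(w^{m+1})=0$; hence for each coefficient $b$ of $g$ the relation $w^{m+1}b\in A$ together with the $S$-$r$-ideal property of $A$ yields $sb\in A$. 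Therefore every coefficient of $sg$ lies in $A$, which is exactly $sg\in A[x]$, completing the argument.
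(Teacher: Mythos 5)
Your proof is correct, and its skeleton matches the paper's: handle the easy direction via $A[x]\cap H=A$ and constant polynomials, and in the hard direction use McCoy plus $f.a.c.$ to extract a single coefficient $w\in C(f)$ with $Ann_H(w)=Ann_H(C(f))=0$, then push the coefficients of $g$ through the $S$-$r$-property of $A$ with its uniform witness $s$. The divergence is at the middle step, and there your version is the rigorous one. The paper asserts that it is ``easy to check'' that $y\in C(f)$ and $fg\in A[x]$ already give $yC(g)\subseteq A$, and then applies the $S$-$r$-property to the products $yb$, $b\in C(g)$. That assertion is false in general: take $H=k[a,b,c,d]$ (a domain, so $f.a.c.$ holds), $A=(ac,\ ad+bc,\ bd)$, $f=a+bx$, $g=c+dx$; then $fg\in A[x]$ and $a\in C(f)$ is regular, yet $ad\notin A$. (This does not contradict the theorem itself, because this $A$ is not an $S$-$r$-ideal, but it does show the paper's intermediate claim does not follow from the hypotheses it invokes.) Your Dedekind--Mertens step, $c(f)^{m+1}c(g)=c(f)^m c(fg)\subseteq A$ with $m=\deg g$, hence $w^{m+1}c(g)\subseteq A$ with $w^{m+1}$ still regular, is exactly what bridges this gap: it replaces the unjustified membership $yb\in A$ by the honest membership $w^{m+1}b\in A$, to which the $S$-$r$-property applies equally well. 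In effect you have transplanted the content-formula argument that the paper uses for Theorem \ref{Poly.Ring.1} into the $f.a.c.$ setting, so your argument is not merely a different route but a repair of the paper's own proof.
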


\begin{proof}
    Assume that $w,z\in H[x]$ and $wz\in A[x]$ with $Ann_{H[x]}(w)=0$. Hence $Ann_H(C(w))=0$. By hypothesis, there exists $y\in C(w)$ such that $Ann_H(C(w))=Ann_H(y)$. Thus, $Ann_H(y)=0$. It is easy to check that since $y\in C(w)$ and $wz\in A[x]$, we have $y C(z)\subseteq A$ and so $yc(z)\subseteq A$. As $A$ is an $S-r-ideal$ in $H,$ we conclude that $\exists s\in S, \; sc(z)\subseteq A.$ This gives that $sz\in A[x]$, that is, $A[x]$ is an $S-r-ideal$ in $H[x]$. The converse is clear. 
\end{proof}

\begin{corollary}
 Let $S\subseteq reg(H)$ be a finite m.c.s. of $H$. Assume that $H$ is an $S-uz-$ring and $A\cap S=\emptyset$ for every ideal $A$ of $H$. The followings are equivalent:\\
    (1) $H$ satisfies Property A.\\
    (2) $A[x]$ is an $S-r-$ideal in $H[x]$, for every ideal $A$ of $H$.
\end{corollary}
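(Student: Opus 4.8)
The plan is to obtain the equivalence directly by combining Proposition \ref{S-uz} with Theorem \ref{Poly.Ring.1}, observing that the standing hypotheses are precisely what make both results applicable at the same time and cause the biconditional in Theorem \ref{Poly.Ring.1} to simplify.

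First I would invoke Proposition \ref{S-uz}. Since $S=\{s_1,\dots,s_n\}$ is a finite m.c.s. and $H$ is an $S-uz-$ring, the relevant direction of Proposition \ref{S-uz} applies and tells us that every ideal $A$ of $H$ with $A\cap S=\varnothing$ is an $S-r-$ideal of $H$. Using the standing assumption that $A\cap S=\varnothing$ for \emph{every} ideal $A$, this upgrades to the statement that every ideal $A$ of $H$ is an $S-r-$ideal of $H$.

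Next, because $S\subseteq reg(H)$, Theorem \ref{Poly.Ring.1} is available. It asserts that $H$ satisfies Property A if and only if, for every ideal $A$, the biconditional ``$A$ is an $S-r-$ideal in $H$ if and only if $A[x]$ is an $S-r-$ideal in $H[x]$'' holds. The key point is that the first half of this biconditional is already known to hold for every $A$ by the previous step. Hence, for each ideal $A$ the biconditional collapses to the single assertion that $A[x]$ is an $S-r-$ideal in $H[x]$. Consequently, statement (2) of Theorem \ref{Poly.Ring.1} is logically equivalent to the present statement (2).

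Finally, since statement (1) of Theorem \ref{Poly.Ring.1} coincides with the present statement (1), applying Theorem \ref{Poly.Ring.1} immediately delivers the desired equivalence $(1)\Leftrightarrow(2)$. There is no real computational obstacle in this argument; the only step demanding care is the bookkeeping that confirms the hypotheses of Proposition \ref{S-uz} (finiteness of $S$ together with the $S-uz$ property) and of Theorem \ref{Poly.Ring.1} (the regularity $S\subseteq reg(H)$) are in force simultaneously, so that the left-hand side of the biconditional can be discharged and the equivalence transported verbatim.
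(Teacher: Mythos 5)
Your proposal is correct and follows exactly the paper's own route: the paper's proof is the one-line citation ``Follow by Theorem \ref{Poly.Ring.1} and Proposition \ref{S-uz},'' and your argument simply makes explicit the logical bookkeeping behind that citation (Proposition \ref{S-uz} plus the disjointness hypothesis discharges the left half of the biconditional in Theorem \ref{Poly.Ring.1}, collapsing it to statement (2) of the corollary). No gaps; you have just written out what the paper leaves implicit.
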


\begin{proof}
Follow by Theorem \ref{Poly.Ring.1} and Proposition \ref{S-uz}.
\end{proof}


\section{A Conflict of Interest Statement}
The corresponding author states that there is no conflict of interest.


\begin{thebibliography}{99}       

\bibitem{AM} Aliabad A. R., Mohamadian R., {\it On SZ-ideals in polynomial rings}, Communications in Algebra®, 39(2), 701-717 (2011).

 		\bibitem {Ander}Anderson D. D., Winders M., {\it Idealization of a module}, Journal of Commutative Algebra 1(1), 3–56 (2009)
 		

        \bibitem{Atiyah}
Atiyah M. F., Macdonald, I. G., {\it Introduction to commutative algebra}, CRC Press., (2018).

        \bibitem{AKA} Azarpanah F., Karamzadeh O. A. S.,  Aliabad R. A., {\it On ideals consisting entirely of zero divisors}, Communications in Algebra, 28(2), 1061-1073 (2000).

\bibitem{DF} D'ANNA M. A. R. C. O., Fontana M., {\it An amalgamated duplication of a ring along an ideal: the basic properties}, Journal of Algebra and its Applications, 6(03), 443-459 (2007).

        \bibitem{DFF} D’Anna M. A. R. C. O., Finocchiaro C. A.,  Fontana M.,  {\it Amalgamated algebras along an ideal}, Commutative algebra and its applications, 155-172 (2009).

\bibitem{DFF2} D’Anna M., Finocchiaro C. A., Fontana M.  {\it Properties of chains of prime ideals in an amalgamated algebra along an ideal}, Journal of Pure and Applied Algebra, 214(9), 1633-1641 (2010).
 


\bibitem{DKM} Dobbs D. E., El Khalfi A., Mahdou N. {\it Trivial extensions satisfying certain valuation-like properties}, Communications in Algebra, 47(5), 2060-2077 (2019).

\bibitem{DMZ} Dumitrescu T., Mahdou N., Zahir Y.,  {\it Radical factorization for trivial extensions and amalgamated duplication rings}, Journal of Algebra and Its Applications, 20(02), 2150025 (2021).


\bibitem{ETY}
Ersoy B. A., Tekir, U.,  Yildiz, E.  {\it S-versions and S-generalizations of idempotents, pure ideals and stone type theorems}, Bulletin of the Korean Mathematical Society, 61(1), 83-92. (2024).


\bibitem{Gilmer}
Gilmer R., {\it Multiplicative ideal theory}, Queen's Papers in Pure and Appl. Math., 90 (1992).

\bibitem{Gunduz}
Gündüz A.,  {\it On 1-absorbing Prime and 1-absorbing Primary Ideals of Amalgamations}, In Proceedings of the Bulgarian Academy of Sciences (Vol. 76, No. 9, pp. 1317-1325) (2023, October)


\bibitem{HM} Hamed A., Malek A.  {\it S-prime ideals of a commutative ring}, Beiträge zur Algebra und Geometrie/Contributions to Algebra and Geometry, 61(3), 533-542 (2020).

\bibitem {Huck} Huckaba J. A., {\it Commutative rings with zero divisors}, Monographs and Textbooks in Pure and Applied Mathematics 117, Marcel Dekker, Inc., New York, (1988).

\bibitem{KM} Kabbaj S. E.,  Mahdou N. .{\it Trivial extensions defined by coherent-like conditions}, Communications in Algebra, 32(10), 3937-3953 (2004).


\bibitem{Kaplan}
Kaplansky I., {\it Commutative Rings},  Boston, MA, USA: Allyn and Bacon, 1970


\bibitem{LC} Luangchaisri P., Changphas T., {\it r-Ideals of Commutative Ordered Semigroups}, International Journal of Mathematics and Computer Science, 19(3), 653-660 (2024).


\bibitem{MMZ2} Mahdou N., Moutui M. A. S., Zahir, Y., {\it Weakly prime ideals issued from an amalgamated algebra}, Hacettepe Journal of Mathematics and Statistics, 49(3), 1159-1167 (2020).

\bibitem{Mass}
Massaoud E., {\it S-primary ideals of a commutative ring}, Communications in Algebra, 50(3), 988-997 (2022).

\bibitem{Mas} Mason, G. R., {\it On z-ideals and prime ideals}, (1971)

\bibitem{MH}
Massaoud E., Hamel A., {\it S-r-ideals in commutative rings}, Boletim da Sociedade Mathematica, v. 22 (2) p:1-11, (2024).


\bibitem {M} Mohamadian R., {\it $ r $-ideals in commutative rings}, Turkish Journal of Mathematics, 39(5), 733-749., (2015)


\bibitem{Nagata} Nagata M., {\it Local rings}, Interscience Tracts in Pure and Appl. Math., (1962).


\bibitem{Uregen} ÜREGEN R. N., {\it On uniformly $ pr $-ideals in commutative rings}, Turkish Journal of Mathematics, 43(4), 1878-1886 (2019).


\bibitem{CH}
Yetkin Çelikel E.,  Hamed A., {\it Quasi-S-primary ideals of commutative rings}, Communications in Algebra, 51(10), 4285-4298 (2023).























 		
 		                                                                                   
















































 	\end{thebibliography}
 \end{document}